\def\half{\frac{1}{2}}
\newcommand\bfu{{\mathbf u}}
\newcommand\bfw{{\mathbf w}}
\newcommand\bfA{{\mathbf A}}
\newcommand\bfM{{\mathbf M}}
\newcommand\bfW{{\mathbf W}}
\newcommand\calT{{\mathcal T}}
\newcommand\andquad{\quad\hbox{ and }\quad}
\newcommand\quadfor{\quad\hbox{ for }\quad}
\newcommand\quadfore{\quad\hbox{ for every }\quad}
\newcommand{\ccdot}{(\cdot,\cdot)}
\newcommand{\diff}{\frac{\d}{\d t}}
\newcommand{\Half}{{\frac{1}{2}}}
\renewcommand\ss{s}
\renewcommand\d{\hbox{\rm{d}}}
\newcommand{\ga}{\gamma}
\newcommand{\Ga}{\Gamma}
\newcommand{\laplace}{\Delta}
\newcommand{\nbg}{\nabla_{\Gamma}}
\newcommand{\nbgh}{\nabla_{\Gamma_h}}
\newcommand{\nb}{\nabla}
\newcommand{\Om}{\Omega}
\newcommand{\pa}{\partial}
\newcommand{\R}{\mathbb{R}}
\def \t {(t)}
\newcommand{\vphi}{\varphi}
\renewcommand{\nu}{\textnormal{n}}
\newcommand{\bbk}{\color{black}}
\newcommand{\ebk}{\color{black}}
\begin{document}

\title{Error estimates for the Cahn--Hilliard equation \\ with dynamic boundary conditions}
\shorttitle{Error estimates for the Cahn--Hilliard equation with dynamic boundary conditions}

\author{%
	{\sc Paula Harder\thanks{Email: paulaharder@posteo.de}}\\[2pt]
	Fraunhofer Institute for Industrial Mathematics ITWM, \\
	Fraunhofer-Platz 1, 67663 Kaiserslautern, Germany \\[2pt]
	{\sc and} \\[2pt]
	{\sc Bal\'azs Kov\'acs}\thanks{Corresponding author. Email: balazs.kovacs@mathematik.uni-regensburg.de}\\[2pt]
	Faculty of Mathematics, University of Regensburg,\\
	Universit\"atsstra\ss{}e 31, 93049 Regensburg, Germany
	}
\shortauthorlist{P.~Harder and B.~Kov\'acs}

\maketitle

\begin{abstract}
{A proof of convergence is given for a bulk--surface finite element semi-discretisation of the Cahn--Hilliard equation with Cahn--Hilliard-type dynamic boundary conditions in a smooth domain. The semi-discretisation is studied in an abstract weak formulation as a second order system. Optimal-order uniform-in-time error estimates are shown in the $L^2$- and $H^1$-norms. The error estimates are based on a consistency and stability analysis. The proof of stability is performed in an abstract framework, based on energy estimates exploiting the anti-symmetric structure of the second order system. 
Numerical experiments illustrate the theoretical results.}
{Cahn--Hilliard equation; dynamic boundary conditions; bulk--surface finite elements; error estimates; stability; energy estimates.}
\end{abstract}

\section{Introduction}

In the present paper we analyse a bulk--surface finite element discretisation of the Cahn--Hilliard equation with Cahn--Hilliard-type dynamic boundary conditions, introduced in \cite{GoldsteinMiranvilleSchimperna}, in a domain with smooth curved boundary. The studied finite element discretisation is based on the reformulation of the problem into a system of second-order equations. We show optimal-order error estimates that are uniform-in-time for the $L^2$- and $H^1$-norms for both variables, over time intervals where the solutions are sufficiently regular. 
The potentials \bbk and their derivatives \ebk in the equation are only required to satisfy local Lipschitz conditions.

The Cahn--Hilliard equation with Cahn--Hilliard-type dynamic boundary conditions is often used in models which account for interactions with non-permeable walls in a confined system, see, e.g., \cite{Kenzler2001,Racke2003cahn,Goldstein2006derivation,Gal_CHmodel,Gal2008well,Gal2009uniform,GoldsteinMiranvilleSchimperna}, which also contain analytic results for existence, uniqueness and regularity. 
\bbk More general Cahn--Hilliard problems with dynamic boundary condition were studied by  \cite{LiuWu,GarckeKnopf,KnopfLam,KnopfLamLiuMetzger}, and \cite{Miranville_book} (see the references therein as well). These works study a more general dynamic boundary condition with different chemical potentials in the bulk and on the surface, which are coupled in various ways. They prove existence, uniqueness, and well-posedness results, and also address numerical discretisation. For more details we refer to the consciously collected classes in \cite{GarckeKnopf,KnopfLamLiuMetzger}, and also to the references in the above four papers. \ebk

Let us briefly review the numerical analysis literature for problems with dynamic boundary conditions in general.

The paper \cite{ElliottRanner} was the first to analyse isoparametric bulk--surface finite element approximations for an \emph{elliptic} bulk--surface PDE in curved domains, and they have established many fundamental results which are due to the non-conformity of the method. 

A conforming finite element discretisation of the Cahn--Hilliard equation with dynamic boundary conditions in a \emph{rectangle based slab} -- in contrast to our curved domain --  was analysed in \cite{CherfilsPetcuPierre_2010} and \cite{CherfilsPetcu}. In both papers the problem is endowed with dynamic boundary conditions except along the first axis where a \emph{periodic boundary condition} is posed. Both papers use a system formulation of the Cahn--Hilliard equation. The case of Allen--Cahn-type dynamic boundary conditions was analysed in \cite{CherfilsPetcuPierre_2010}, while the case of Cahn--Hilliard-type dynamic boundary conditions was analysed in \cite{CherfilsPetcu}.
In the paper \cite{CherfilsPetcu} optimal-order error bounds for both variables are shown, but -- in contrast to our results -- these estimates are only time-uniform for the original variable and of $L^2$-in-time for the auxiliary variable. The analysis therein assumes global Lipschitz continuity of the derivatives of the non-linearities, and also uses a positivity condition at infinity for their first derivatives. The analysis in the present paper does not use such assumptions.

\bbk 
For the Cahn--Hilliard equation in a \emph{polygonal} domain with the (above mentioned) more general dynamic boundary condition coupling two chemical potentials, finite element semi-discretisation and finite-element/backward Euler full discretisation have been presented and analysed in \cite{KnopfLam,KnopfLamLiuMetzger,Metzger}. In these papers the authors are proving weak convergence of a (sub-)sequence under quite mild regularity assumptions (however, sometimes requiring a CFL-type condition). 
\ebk 

General linear and semi-linear parabolic equations (of second order) with dynamic boundary conditions were analysed in \cite{dynbc}, casting the problems in a general abstract setting and showing optimal-order error bounds for a wide range of problems. The Cahn--Hilliard equation was not analysed therein, although Section~2.3.3 of \cite{dynbc} contains a possible approach. The present paper uses a different one.
Although, \cite{Fairweather} already gave error estimates for a conforming Galerkin method for a class of linear parabolic problems, it went unnoticed in the dynamic boundary conditions community, possibly due to the fact that the term \emph{dynamic} has not appeared at all in his paper.

On the other hand, the numerical analysis of \emph{wave} equations with dynamic boundary conditions on a smooth domain has also developed rapidly over the past few years, see \cite{Hip17,HipHS17,dynbc_waves_L2} for error estimates for linear problems in a bounded domain, \cite{HochbruckLeibold} for semi-linear wave equations (and \cite{acousticGIBC} for wave equations in an unbounded domain).

\bbk To our knowledge, there are no error estimates available for the Cahn--Hilliard equation with Cahn--Hilliard-type dynamic boundary conditions \cite{Gal_CHmodel,GoldsteinMiranvilleSchimperna}. \ebk 

\medskip
In this paper we first rewrite the weak formulation of the second order system corresponding to the Cahn--Hilliard equation with Cahn--Hilliard-type dynamic boundary conditions in a smooth domain in a general abstract setting (very similar to that of \cite{dynbc}). The \emph{anti-symmetric} structure of this abstract weak formulation will play a crucial role later on. The (non-conforming) bulk--surface finite element semi-discretisation is also rewritten in a semi-discrete version of the abstract formulation, which naturally preserves the anti-symmetry.

Our main theorem, stating optimal-order time uniform \bbk $L^2$- and $H^1$-norm \ebk error estimates \bbk of a bulk--surface finite element spatial semi-discretisation for the Cahn--Hilliard equation with Cahn--Hilliard-type dynamic boundary conditions in a \emph{smooth} domain, \ebk will be proved by separately studying the stability and consistency of the method. 

Proving stability, i.e.~a uniform-in-time bound of the errors in terms of the defects and their time derivatives, is the main issue in the paper. The main idea of the stability proof is to exploit the anti-symmetric structure of the semi-discrete error equations and combine it with multiple energy estimates, testing with the errors and also with the time derivative of the errors.  The main idea of the stability proof was originally developed for Willmore flow \cite{Willmore}.
A key issue in the stability proof is to establish uniform-in-time $L^\infty$ norm bounds for the errors, which are then used to estimate the non-linear terms. The $L^\infty$ bounds are obtained from the time-uniform $H^1$ norm error estimates via an inverse estimate. 
The stability proof is completely independent of any geometric approximation errors.

Since the stability proof is entirely performed in an abstract setting, it can be easily generalised to other problems which can be cast in the same setting, for instance the Cahn--Hilliard equation with standard boundary conditions, or more general semi-linear Cahn--Hilliard equations, etc., see Section~\ref{section:generalisations}. The versatility of the stability analysis is clear in view of the related stability proofs in \cite{Willmore} and \cite{CHsurf}.

The consistency analysis, i.e.~proving estimates for the defects (the error obtained upon inserting the Ritz map of the exact solutions into the method) and their time derivatives, uses geometric approximation error estimates and error estimates for the interpolation and the Ritz map for bulk--surface finite elements.

The paper is structured as follows.
In Section~\ref{section:CH} we introduce the basic notation, then formulate the fourth order problem, and rewrite it as a system of second order equations. We formulate the weak formulation corresponding to the system, which is then rewritten in a general abstract setting. In the remainder of the paper we work in this abstract setting.
Section~\ref{section:bulk surface FEM} describes the spatial semi-discretisation using non-conforming bulk--surface finite elements and the abstract semi-discrete setting.
Section~~\ref{section:main results} contains the main result of this paper, Theorem~\ref{theorem:semidiscrete convergence}, which proves optimal-order uniform-in-time error estimates for the semi-discretisation of the Cahn--Hilliard equation with Cahn--Hilliard-type dynamic boundary conditions.
The proof of Theorem~\ref{theorem:semidiscrete convergence} is shown by separating the issues of stability and consistency. Section~\ref{section:stability} contains the stability result based on the novel energy estimates using the anti-symmetric structure of the second order system. Section~\ref{section:generalisations} extends the stability results to various problems. Consistency of the method is studied in Section~\ref{section:consistency} (which also collects various geometric errors bounds and approximation estimates). The combination of the results of these two sections, i.e.~the proof of the main theorem, is performed in Section~\ref{section:proof of main theorem}.
In Section~\ref{section:BDF} we give a brief outlook on linearly implicit backward difference time discretisations, which preserved the mentioned anti-symmetric structure, and are also used in our numerical experiments.
Section~\ref{section:numerics} is devoted to numerical experiments, which illustrate our theoretical results.

\section{Cahn--Hilliard equation with dynamic boundary conditions}
\label{section:CH}

Let us briefly introduce some notations.
Let the bulk $\Om \subset\R^d$ ($d=2$ or $3$) be a bounded domain, with an (at least) $C^2$ boundary $\Ga = \partial\Om$, which is referred to as the surface. Further, let $\nu$ denote the unit outward normal vector to $\Ga$.
Then the surface (or tangential) gradient on $\Ga$, of a function $u : \Ga \to \R$, is denoted by $\nbg u$, and is given by $\nbg u = \nb \bar u -( \nb \bar u \cdot \nu) \nu$, (where $\bar u$ is an arbitrary extension of $u$ in a neighbourhood of $\Ga$), while the Laplace--Beltrami operator on $\Ga$ is given by $\laplace_\Ga u = \nbg \cdot \nbg u$. For more details see, e.g., \cite{DziukElliott_acta}.
Moreover, $\ga u$ denotes the trace of $u$ on $\Ga$ 
and $\pa_{\nu} u$ denotes the normal derivative of $u$ on $\Ga$.
Finally, temporal derivatives are denoted by $\dot{\phantom{u}} = \d / \d t$.

In this paper we consider the Cahn--Hilliard equation with dynamic boundary conditions of Cahn--Hilliard-type first derived in \cite{GoldsteinMiranvilleSchimperna}, or -- as we will also refer to it -- the Cahn--Hilliard/Cahn--Hilliard coupling, that is the fourth-order equation, for a function $u: \overline{\Om} \times [0,T] \to \R$,
\begin{subequations}
	\label{eq:CH - 4th order form}
	\begin{alignat}{3}
	\label{eq:CH - 4th order form - eq}
	\dot u &= \laplace \big( -\laplace u + W_\Om'(u) \big) & \qquad & \text{in } \Om, \\
	\label{eq:CH - 4th order form - bc}
	\dot u &= \laplace_\Ga \big( -\laplace_\Ga u + W_\Ga'(u)  + \pa_\nu u \big) - \pa_\nu \big( -\Delta_\Ga u + W'_\Ga(u)  + \pa_\nu u \big) & \qquad & \text{on }  \Ga ,
	\end{alignat}
\end{subequations}
with continuous (and sufficiently regular) initial condition $u(0) = u^0$. The scalar functions $W_\Om$ and $W_\Ga$ are \bbk free energy \ebk potentials and are only assumed to have locally Lipschitz \bbk second and third \ebk derivatives. By $W'$ we simply denote the derivative of $W$.

In typical examples double well potentials are often used, i.e.~$W(u) = (u^2 - 1)^2$, \bbk for which the above assumptions are satisfied. \ebk The solution $u \in [-1,1]$ models the concentration of two fluids, with $u=\pm 1$ indicating the pure occurrences of each. 
To model the interactions within systems confined by a non-permeable wall, dynamic boundary conditions in this context were introduced in \cite{GoldsteinMiranvilleSchimperna}.

\subsection{Weak formulation as a second order system}
\label{section:weak form}
By introducing an auxiliary function $w:\overline{\Om} \times [0,T] \to \R$ we rewrite the Cahn--Hilliard equation \eqref{eq:CH - 4th order form} into a system of second order partial differential equations: For $u,w:\overline{\Om} \times [0,T] \to \R$
\begin{subequations}
	\label{eq:CH system}
	\begin{alignat}{3}
	\label{eq:CH system - u Om}
	\dot u = &\ \Delta w & \quad & \text{in } \Om \\
	\label{eq:CH system - w Om}
	w = &\  -\Delta u + W_\Om'(u) & \quad & \text{in }  \Om , \\
	\intertext{with dynamic Cahn--Hilliard boundary conditions}
	\label{eq:CH system - u Ga}
	\dot u = &\ \Delta_\Ga w - \partial_\nu w & \quad & \text{on } \Ga \\
	\label{eq:CH system - w Ga}
	w = &\  -\Delta_\Ga u + W'_\Ga(u) + \partial_\nu u & \qquad & \text{on } \Ga .
	\end{alignat}
\end{subequations}

\bbk Let us note here that for the more general Chan--Hilliard/Cahn--Hilliard coupling of \cite{GarckeKnopf,KnopfLamLiuMetzger,KnopfLam} is the Cahn--Hilliard equation but on the boundary \eqref{eq:CH system - u Ga}--\eqref{eq:CH system - w Ga} holds for the chemical potential $w_\Ga$ instead of $w$, which are then coupled by $L \pa_{\nu} w = \beta w_\Ga - w$ (with parameters $L>0$, $\beta \neq 0$). The coupled problem \eqref{eq:CH system} is the limit $L \searrow 0$ and $\beta = 1$. \ebk 

Before we turn to the weak formulation of the problem \eqref{eq:CH system}, let us introduce some notations for function spaces. We will use standard Sobolev spaces $H^k(\Om)$ and $H^k(\Ga)$, for $k \geq 0$, in the bulk and on the surface, respectively, for more details see \cite{DziukElliott_acta} and \cite{ElliottRanner}. The variational formulation will also use the Hilbert spaces 
\begin{equation}
\label{eq:Hilbert spaces}
	\begin{aligned}
		V = &\ \{ v \in H^1(\Om) \mid \ga u \in H^1(\Ga) \} 
		\andquad 
		H = L^2(\Om) \times L^2(\Ga) , \\
		&\ \textnormal{with a dense embedding from $V$ into $H$:} \quad v \mapsto (v, \ga v) ,
	\end{aligned} 
\end{equation}
with norms
\begin{equation}
\label{eq:Hilbert space norms}
	\begin{aligned}
		\|u\|^2 := &\ \|u\|_V^2 = \|u\|_{H^1(\Om)}^2 + \|\ga u\|_{H^1(\Ga)}^2 
		\andquad \\
		\bbk |(u_\Om,u_\Ga)|^2 := \ebk &\ \bbk \|(u_\Om,u_\Ga)\|_H^2 = \|u_\Om\|_{L^2(\Om)}^2 + \|u_\Ga\|_{L^2(\Ga)}^2 \ebk  .
	\end{aligned} 
\end{equation}
A similar setting was used in \cite{dynbc} and \cite{Hip17,dynbc_waves_L2} for parabolic and wave-type problems with dynamic boundary conditions. We will abbreviate pairs $(v,\ga v)$ in $H$ by their first argument $v$.

The weak formulation of the Cahn--Hilliard equation with dynamic boundary conditions of Cahn--Hilliard-type is derived by multiplying \eqref{eq:CH system - u Om} with the test function $\vphi^u \in V$, and \eqref{eq:CH system - w Om} with $\vphi^w \in V$, cf.~\cite[equation~(3.9)--(3.10)]{GoldsteinMiranvilleSchimperna}. Then both equations are integrated over the domain $\Om$, and by applying Green's formula for both, we obtain
\begin{align*}
	\int_\Om \dot u \vphi^u = &\ - \int_\Om  \nb w \cdot \nb \vphi^u + \int_\Ga \partial_\nu w \gamma \vphi^u , \\
	\int_\Om w \vphi^w = &\ \int_\Om \nb u\cdot \nb\vphi^w+ \int_\Om W'_\Om(u) \vphi^w - \int_\Ga \partial_\nu u \gamma \vphi^w.
\end{align*}
Plugging in the dynamic boundary conditions \eqref{eq:CH system - u Ga} and \eqref{eq:CH system - w Ga} into the boundary terms above, applying Green's formula now on the boundary, and collecting the terms, yield
\begin{subequations}
	\label{eq:weak form}
	\begin{align}
	\label{eq:weak form - u}
	&\ \Big(\int_\Om \dot u \vphi^u + \int_\Ga\gamma \dot u \gamma \vphi^u\Big) +\Big( \int_\Om  \nb w\cdot \nb \vphi^u + \int_\Ga \nb_\Ga  w\cdot \nb_\Ga  \vphi^u \Big) = 0 , \\
	&\ \Big(\int_\Om   w \vphi^w+\int_\Ga \gamma w \gamma \vphi^w \Big) -\Big(\int_\Om \nb u\cdot \nb\vphi^w + \int_\Ga \nb_\Ga u \cdot \nb_\Ga \vphi^w \Big) 
	\label{eq:weak form - w}
	= \int_\Om W'_\Om(u) \vphi^w + \int_\Ga W'_\Ga(u) \gamma \vphi^w ,
	\end{align}
\end{subequations}
where for brevity we write $\nbg v$ instead of $\nbg (\ga v)$, and have also suppressed the trace operator in $\bbk W'_\Ga(u) := \ebk W'_\Ga(\bbk \ga \ebk u)$. We will employ these conventions throughout the paper.

\subsection{Abstract formulation}
\label{section:abstract formulation}

It is insightful to formulate the variational formulation in an abstract setting.
To this end we introduce the bilinear forms, on $V$ and $H$, respectively, 
\begin{equation}
\label{eq:bilinear forms}
\begin{aligned}
a(u,v) = &\ \int_\Om \nb u \cdot \nb v + \int_\Ga \nbg u \cdot \nbg v 
\andquad
\bbk m\big((u_\Om,u_\Ga) , (v_\Om,v_\Ga) \big) = \int_\Om u_\Om v_\Om + \int_\Ga u_\Ga v_\Ga \ebk , 
\end{aligned}
\end{equation}
and let
\begin{equation*}
a^*\ccdot = a\ccdot + m\ccdot .
\end{equation*}
We note here that the norms \eqref{eq:Hilbert space norms} on $V$ and $H$ are directly given by 
\begin{align*}
	\|u\|^2 = &\ a^*(u,u) = a(u,u) + m((u,\ga u) , (v,\ga v)) \andquad
	\bbk |(u_\Om,u_\Ga)|^2 \bbk =  m((u_\Om,u_\Ga) , (v_\Om,v_\Ga))  \ebk .
\end{align*}
Furthermore, on $V$ the bilinearform $a\ccdot$ generates the semi-norm
\begin{equation*}
	\|u\|_a^2 := a(u,u) .
\end{equation*}
\bbk 
We will mostly work with elements of $V$ embedded into $H$, and according to the above notational convention $u = (u,\ga u) \in H$ for embedded pairs, for brevity we will write 
\begin{equation*}
	m(u,v) = m\big( (u,\ga u) , (v,\ga v) \big), \andquad |u|^2 = m(u,u) .
\end{equation*}
These notations are also employed throughout the paper. 
\ebk

For the non-linear term, under the expression $m(W'(u),\vphi^w)$ we mean 
\begin{equation}
\label{eq:nonlinear term def}
	m(W'(u),\vphi^w) = \int_\Om W'_\Om(u) \vphi^w + \int_\Ga W'_\Ga(u) \gamma \vphi^w.
\end{equation}
Using these bilinear forms, we rewrite the weak formulation \eqref{eq:weak form}. The Cahn--Hilliard equation with Cahn--Hilliard-type dynamic boundary conditions in the above abstract setting reads: Find a function $u\in C^1([0,T],H) \cap L^2([0,T],V)$ and $w \in L^2([0,T],V)$ such that, for time $0 < t\leq T$ and all $\vphi^u, \vphi^w \in V$,
\begin{subequations}
	\label{eq:abstract weak form}
	\begin{alignat}{2}
	\label{eq:abstract weak form - u}
	m(\dot u\t,\vphi^u) + a(w\t,\vphi^u) = &\ 0 , \\
	\label{eq:abstract weak form - w}
	m(w\t,\vphi^w) - a(u\t,\vphi^w) = &\ m(W'(u\t),\vphi^w) ,
	\end{alignat}
\end{subequations}
for given initial data \bbk $u(0) = u^0 \in V$, with finite Ginzburg--Landau energy. \ebk 

From now on we will use this (rather general) abstract formulation for the Cahn--Hilliard equation with Cahn--Hilliard-type dynamic boundary condition. 

\begin{remark}
	\label{remark:other dynbc}
	Other types of dynamic boundary conditions also fit into this framework by using different Hilbert spaces, and changing the boundary integrals in the bilinear forms $m$ and $a$. For more details see Section~\ref{section:generalisations}.
\end{remark}

\bbk 
It is important to note here that the weak form of the standard Cahn--Hilliard equation fits into the above abstract framework, and its weak formulation can then be written exactly as \eqref{eq:abstract weak form}. Therefore, apart from the well-posedness and regularity results of \cite{GoldsteinMiranvilleSchimperna}, \cite{Miranville,Miranville_book}, many results for Cahn--Hilliard equations apply to the present case as well \cite[Chapter~3]{Miranville_book}, or \cite{ElliottRanner_CH}, etc.

We directly obtain, by testing \eqref{eq:abstract weak form - u} with $w$ and \eqref{eq:abstract weak form - w} with $\dot u$, and combining the two equalities to cancel the mixed terms $m(\dot u,w)$, the energy decay
\begin{equation}
\label{eq:the first energy estimate}
	0 = a(u,\dot u) + m(W'(u),\dot u) + a(w,w) 
	= \half \diff  \Big( \|u\|_a^2 + m(W(u),1) \Big) + \|w\|_a^2 ,
\end{equation}
and therefore the Ginzburg--Landau free energy
\begin{equation}
\label{eq:energy definition}
	\mathcal{E}(t) := \int_\Om |\nb u(\cdot,t)|^2 + \int_\Ga |\nbg (\ga u(\cdot,t))|^2 + \int_\Om W_\Om(u(\cdot,t))  + \int_\Ga W_\Ga(\ga u(\cdot,t)) 
\end{equation}
is monotone decreasing in $[0,T]$.

The following well-posedness and regularity result was proved in \cite{GoldsteinMiranvilleSchimperna}, in particular see Theorem~3.2 and 3.6, with Remark~3.13: For an initial value $u^0$ of finite energy $\mathcal{E}(0)$ which is smooth enough that $w(\cdot,t) \in V$ (via \eqref{eq:CH system - w Om} and \eqref{eq:CH system - w Ga}), then the solution satisfies
\begin{equation}
\label{eq:reg - GMS}
	\begin{aligned}
		&\ u \in L^\infty(0,T;V) \qquad \text{with} \qquad \dot u \in L^2(0,T;V) , \\
		&\ w \in L^2(0,T;V) \cap L^\infty(0,T;V) .
	\end{aligned}
\end{equation}
The straightforward adaptation of the arguments of \cite[Section~4.2--4.4]{ElliottRanner_CH} yields the $H^2$ regularity result, for $u^0 \in H^2(\Om)$ with $\ga u^0 \in H^2(\Ga)$:
\begin{equation}
\label{eq:reg - ER adapt}
\begin{aligned}
	&\ u \in L^\infty(0,T;H^2(\Om)) \quad \text{with} \quad \ga u \in L^\infty(0,T;H^2(\Ga)) , \\
	&\ w \in L^2(0,T;H^2(\Om)) \quad \text{with} \quad \ga w \in L^2(0,T;H^2(\Ga)) .
\end{aligned}
\end{equation}

\ebk 

\section{Semi-discretisation of Cahn--Hilliard equations with dynamic boundary conditions}
\label{subsection:semi discrete abstract CH equations}

For the numerical solution of the above examples we consider a linear finite element method both in the bulk and on the surface. 
In the following, from \cite{ElliottRanner}, \cite[Section~3.2.1]{dynbc}, and \cite{dynbc_waves_L2}, we will briefly recall the construction of the discrete domain, the finite element space and the lift operation, the discrete bilinear forms, which will be used to discretize the Cahn--Hilliard problem of Section~\ref{section:CH}.

\subsection{The bulk--surface finite elements}
\label{section:bulk surface FEM}

The domain $\Om$ is approximated by a triangulation $\calT_h$ with \mbox{maximal mesh width $h$.}
The union of all elements  of $\calT_h$ defines the polyhedral domain $\Om_h$ whose boundary $\Ga_h := \pa \Om_h$ is an interpolation of $\Ga$, i.e.~the vertices of $\Ga_h$ are on $\Ga$.
Analogously, we denote the outer unit normal vector of $\Ga_h$ by $\nu_h$.
We assume that $h$ is sufficiently small to ensure that for every point $x\in\Ga_h$ there is a unique point $p\in\Ga$ such that $x-p$ is orthogonal to the tangent space $T_p\Ga$ of $\Ga$ at $p$.
For convergence results, we consider a quasi-uniform family of such triangulations $\calT_h$ of $\Om_h$.

The finite element space $V_h \nsubseteq H^1(\Om)$ corresponding to $\calT_h$ is spanned by continuous, piecewise linear nodal basis functions on $\Omega_h$, satisfying for each node $(x_k)_{k=1}^N$
$$
\phi_j(x_k) = \delta_{jk}, \quadfor j,k = 1, \dotsc, N .
$$
Then the finite element space is given as
$$
V_h = \textnormal{span}\{\phi_1, \dotsc, \phi_N \} .
$$
We note here that the restrictions of the basis functions to the boundary $\Ga_h$ again form a surface finite element basis over the approximate boundary elements.

\bbk The discrete tangential gradient $\nbgh$ is piece-wisely defined analogously to $\nbg$. The discrete trace operator $\ga_h v_h$ is defined by the restriction of the continuous function $v_h \in V_h$ onto $\Ga_h$. We apply the same notational conventions as for the continuous case, e.g.~$\nbgh v_h := \nbgh (\ga_h v_h)$, etc. \ebk 

Following \cite{Dziuk88}, we define the \emph{lift} operator $\cdot^\ell \colon V_h \to V$ to compare functions in $V_h$ with functions in $V$. For functions $v_h:\Ga_h\to \R$, we define the lift as 
\begin{equation}
\label{eq:lift definition}
v_h^\ell \colon \Ga\to\R \quad \text{with} \quad v_h^\ell(p)=v_h(x), \quad \forall p\in\Gamma,
\end{equation} 
where $x\in\Ga_h$ is the \emph{unique} point on $\Ga_h$ with $x-p$  orthogonal to the tangent space $T_p\Ga$.
We further consider the \emph{lift} of functions $v_h:\Om_h\to \R$ to $v_h^\ell:\Om\to\R$ by setting $v_h^\ell(p)=v_h(x)$  if $x\in\Omega_h$ and $p\in \Omega$, where the two points are related as described in detail in \cite[Section~4]{ElliottRanner}. 
The mapping $G_h : \Om_h \to \Om$ is defined piecewise, for an element $E \in \calT_h$, by
\begin{equation}
\label{eq:bulk mapping}
	G_h|_E (x) = F_e\big((F_E)^{-1}(x)\big), \qquad \text{for } x \in E,
\end{equation}
where $F_e$ is a $C^1$ map (see \cite[equation~(4.2) \& (4.4)]{ElliottRanner}) from the reference element onto the smooth element $e \subset \Om$, and $F_E$ is the standard affine linear map between the reference element and $E$, see, e.g.~\cite[equation~(4.1)]{ElliottRanner}. 
Finally, the lifted finite element space is denoted by $V_h^\ell$, and is given as $V_h^\ell = \{  v_h^\ell \mid v_h \in V_h \}$.

Note that both definitions of the lift coincide on $\Gamma$.

\subsection{Discrete spaces and discrete bilinear forms}

The discrete bilinear forms on $V_h$, i.e.~the discrete counterparts of $a$ and $m$, are given, for $u_h,v_h \in V_h$, by
\begin{equation}
\label{eq:discrete bilinear forms}
\begin{aligned}
a_h(u_h,v_h) = &\ \int_{\Om_h} \nb u_h \cdot \nb v_h + \int_{\Ga_h} \nbgh u_h \cdot \nbgh v_h , \\
m_h(u_h,v_h) = &\ \int_{\Om_h} u_h v_h + \int_{\Ga_h} (\ga_h u_h) (\ga_h v_h) ,
\end{aligned}
\end{equation}
and let $a_h^*\ccdot = a_h\ccdot + m_h\ccdot$.

The discrete norms on $V_h$, corresponding to $\|\cdot\|$ and $|\cdot|$, are given by 
\begin{align*}
\|u_h\|_h^2 := &\ \|u_h\|_{H^1(\Om_h)}^2 + \|\ga_h u_h\|_{H^1(\Ga_h)}^2 = a_h(u_h,u_h) + m_h(u_h,u_h) , \\
|u_h|_h^2 := &\ \|u_h\|_{L^2(\Om_h)}^2 + \|\ga_h u_h\|_{L^2(\Ga_h)}^2 = m_h(u_h,u_h) ,
\intertext{and the discrete semi-norm induced by $a_h\ccdot$ (analogously to the continuous case)}
\|u_h\|_{a_h}^2 := &\ \|\nb u_h\|_{L^2(\Om_h)}^2 + \|\nbgh u_h\|_{L^2(\Ga_h)}^2 = a_h(u_h,u_h) .
\end{align*}

Later on in Section~\ref{section:geometric approx errors}, it will be shown that these discrete norms and their continuous counterparts are $h$-uniformly equivalent.

\subsection{A Ritz map}

We will now define a Ritz map, which will be used in the error analysis, and also for prescribing the initial data for the semi-discrete problem. 

From \cite[Section~3.4]{dynbc} we recall the Ritz map $\widetilde R_h : V \to V_h$ which is defined, for $u\in V$, by
\begin{equation}
\label{eq:Ritz map definition}
a_h^*(\widetilde R_h u,\vphi_h) = a^*(u,\vphi_h^\ell), \quadfore \vphi_h \in V_h .
\end{equation}
The above Ritz map is well-defined for all $u \in V$ due to the ellipticity of the bilinear form $a_h^*$, see \cite[Section~3.4]{dynbc}. Note that the Ritz map $\widetilde R_h u$ is the Riesz representation of $u$.  Further, note that, the bilinear forms $a^*$ and $a_h^*$ contain boundary integrals which influence $\widetilde R_h$.
The lifted Ritz map will be denoted by $R_h u := (\widetilde R_h u)^\ell \in V_h^\ell$.

\subsection{Semi-discrete problem}
\label{section:semidiscrete problem}

The semi-discrete problem then reads: Find $u_h \in C^1([0,T],V_h)$ and $w_h \in L^2([0,T],V_h)$ such that, for time $0 < t\leq T$ and all $\vphi_h^u, \vphi_h^w \in V$,
\begin{subequations}
	\label{eq:semi-discrete problem - pre}
	\begin{alignat}{2}
	m_h(\dot u_h\t, \vphi_h^u) + a_h(w_h\t,\vphi_h^u) = &\ 0 , \\
	\label{eq:semi-discrete problem - b - pre}
	m_h(w_h\t,\vphi_h^w) - a_h(u_h\t,\vphi_h^w) = &\ m_h(W'(u_h\t),\vphi_h^w) ,
	\end{alignat}
\end{subequations}
where the initial data $u_h(0) = u_h^0 \in V_h$ is chosen to be the Ritz map of $u^0$ \bbk (required to be in $V$), \ebk i.e.~$u_h(0) = u_h^0 = \widetilde R_h u^0 \in V_h$, and the initial value for $w_h$ is obtained by solving the elliptic equation \eqref{eq:semi-discrete problem - b - pre}.

\bbk 
According to the following result the semi-discrete problem is well-posed, and both solution components have continuous time-derivatives. Similar results have been obtained for Cahn--Hilliard equations, e.g., with dynamic boundary conditions in a rectangular based time-slab \cite[Theorem~2.1]{CherfilsPetcu}, or on an evolving surface \cite[Theorem~3.1]{ElliottRanner_CH}. 
\begin{proposition}
\label{proposition:semi-discrete well-posedness}
	In the above semi-discrete setting, including the assumption on the mesh, initial data, and nonlinearities, the semi-discrete problem \eqref{eq:semi-discrete problem - pre} has a unique solution $u_h(\cdot,t), w_h(\cdot,t) \in C^1([0,T],V_h)$
	and which has monotone decreasing Ginzburg--Landau energy $\mathcal{E}_h(t)$, satisfying the semi-discrete energy estimate, for $0 \leq t \leq T$,
	\begin{equation*}
		\mathcal{E}_h(t) := \int_{\Om_h} |\nb u_h(\cdot,t)|^2 + \int_{\Ga_h} |\nbgh (\ga_h u_h(\cdot,t))|^2 + \int_{\Om_h} W_\Om(u_h(\cdot,t))  + \int_{\Ga_h} W_\Ga(\ga_h u_h(\cdot,t)) \leq  \mathcal{E}_h(0) .
	\end{equation*}
	In addition, if $u^0 \in H^2(\Om)$ with $\ga u^0 \in H^2(\Ga)$, then the energy satisfies $\mathcal{E}_h(t) \leq c \mathcal{E}(0)$ for any $0 \leq t \leq T$.
\end{proposition}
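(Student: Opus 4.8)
The plan is to read the semi-discrete system \eqref{eq:semi-discrete problem - pre} as a finite-dimensional one, reduce it to an ODE in $V_h$, and then close the existence argument with the mass-conservation and energy identities of the scheme. Since $m_h$ is an inner product on $V_h$, equation \eqref{eq:semi-discrete problem - b - pre} determines $w_h$ uniquely in terms of $u_h$, and the resulting map $u_h\mapsto w_h$ is $C^1$ because $W_\Om',W_\Ga'$ are $C^1$ (their derivatives being locally Lipschitz, hence continuous). Substituting this into the first equation of \eqref{eq:semi-discrete problem - pre} and again inverting $m_h$ gives an autonomous ODE $\dot u_h=F(u_h)$ on $V_h$ whose right-hand side is locally Lipschitz: the only nonlinear contribution comes from $W_\Om',W_\Ga'$ evaluated at finite element functions ranging over a bounded, finite-dimensional set, on which these functions are Lipschitz. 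With the prescribed $u_h(0)=\Rh u^0\in V_h$ (and $w_h(0)$ obtained from \eqref{eq:semi-discrete problem - b - pre}), the Picard--Lindel\"of theorem yields a unique maximal solution $u_h\in C^1([0,t_*),V_h)$, and then $w_h=w_h[u_h]\in C^1([0,t_*),V_h)$ since $W_\Om',W_\Ga'$ are $C^1$; compare the analogous arguments in \cite[Theorem~2.1]{CherfilsPetcu} and \cite[Theorem~3.1]{ElliottRanner_CH}.

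I would then extract the two structural identities at the discrete level. Testing the first equation of \eqref{eq:semi-discrete problem - pre} with the constant $1\in V_h$ and using $a_h(w_h,1)=0$ yields $\diff m_h(u_h(\cdot,t),1)=0$, i.e.\ conservation of the combined bulk--surface mass. Testing the first equation with $\vphi_h^u=w_h$ and \eqref{eq:semi-discrete problem - b - pre} with $\vphi_h^w=\dot u_h$, subtracting to cancel the mixed term $m_h(\dot u_h,w_h)$, and using the chain-rule identities exactly as in the continuous computation \eqref{eq:the first energy estimate} (now with $a_h,m_h$ and $\Om_h,\Ga_h$), one obtains $\diff \mathcal E_h(t)=-c\,\|w_h(\cdot,t)\|_{a_h}^2\leq 0$ with $c>0$. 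Hence $\mathcal E_h$ is monotone decreasing on the interval of existence, and in addition $\int_0^t\|w_h(\cdot,s)\|_{a_h}^2\,\d s\leq c\,\mathcal E_h(0)$ there.

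These identities close the argument. Using the standing assumption that $W_\Om,W_\Ga$ are bounded from below (as for the double well $W(u)=(u^2-1)^2$), the bound $\mathcal E_h(t)\leq\mathcal E_h(0)$ controls $\|u_h(t)\|_{a_h}$; combined with conservation of mass -- or, alternatively, with the estimate obtained by testing the first equation of \eqref{eq:semi-discrete problem - pre} with $\vphi_h^u=u_h$ and using the bound on $\int_0^t\|w_h\|_{a_h}^2\,\d s$ just derived -- this yields $\sup_{0\leq t<t_*}\|u_h(t)\|_h<\infty$. By the continuation principle for ODEs, $t_*>T$, so the solution exists and is unique on all of $[0,T]$ with $u_h,w_h\in C^1([0,T],V_h)$, and $\mathcal E_h$ is monotone decreasing there. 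This $h$-independent a priori bound -- the step upgrading the local to the global-in-time solution -- is the only non-routine point: energy dissipation by itself controls only the $H^1$-seminorm and the potential term, so it is precisely here that the conservation law (and the boundedness from below of the potentials) enters.

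For the final claim, $\mathcal E_h(t)\leq\mathcal E_h(0)$ reduces matters to bounding $\mathcal E_h(0)$ -- that is, $\mathcal E_h$ evaluated at $\Rh u^0$ -- by a constant independent of $h$. The Dirichlet part is controlled by the $H^1$-stability of the Ritz map (a consequence of its definition \eqref{eq:Ritz map definition} and the $h$-uniform ellipticity of $a_h^*$): testing \eqref{eq:Ritz map definition} with $\vphi_h=\Rh u^0$ gives $\|\Rh u^0\|_{a_h}^2\leq a_h^*(\Rh u^0,\Rh u^0)=a^*(u^0,R_h u^0)\leq\|u^0\|\,\|R_h u^0\|\leq c\,\|u^0\|^2$. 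For the potential part I would compare $\int_{\Om_h}W_\Om(\Rh u^0)+\int_{\Ga_h}W_\Ga(\ga_h\Rh u^0)$ with $\int_\Om W_\Om(u^0)+\int_\Ga W_\Ga(\ga u^0)$ using the geometric approximation estimates and the Ritz error estimate from Section~\ref{section:geometric approx errors}; passing from the (small) difference of the arguments to the difference of the potential values uses the local Lipschitz continuity of $W_\Om,W_\Ga$, which is legitimate since $u^0\in H^2(\Om)$ with $\ga u^0\in H^2(\Ga)$ embeds into $L^\infty$ for $d\leq 3$ and the Ritz error tends to $0$ in $L^\infty$, so that $R_h u^0$ stays in a fixed bounded set uniformly in $h$. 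Altogether $\mathcal E_h(0)\leq c\,\mathcal E(0)$, hence $\mathcal E_h(t)\leq c\,\mathcal E(0)$ for all $t\in[0,T]$.
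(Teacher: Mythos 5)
Your proof is correct and follows essentially the same route as the paper: reduction of \eqref{eq:semi-discrete problem - pre} to a finite-dimensional ODE (the paper writes this in matrix--vector form, eliminating $w_h$ via the invertible mass matrix), unique short-time existence from the local Lipschitz property of $W'$, extension to $[0,T]$ by the semi-discrete energy decay obtained from exactly the anti-symmetric testing of \eqref{eq:the first energy estimate}, and the Ritz-map error estimates for $\mathcal{E}_h(0)\leq c\,\mathcal{E}(0)$. The only point worth noting is that your continuation step explicitly invokes boundedness of $W_\Om,W_\Ga$ from below together with discrete mass conservation to turn the energy bound into an a priori bound on $\|u_h\|_h$ --- an ingredient the paper leaves implicit by citing the argument of \cite{ElliottRanner_CH} --- so you are supplying a detail the paper glosses over rather than taking a different route.
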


\begin{proof}
The proof uses standard theory of ordinary differential equations to show short-time existence, and extends this to $[0,T]$ by energy estimates. The proof is based on the proof (for a more complicated problem) of Theorem~3.1 in \cite{ElliottRanner_CH}.

We first rewrite the semi-discrete problem \eqref{eq:semi-discrete problem - pre} in a matrix--vector formulation. Collecting the nodal values of $u_h(\cdot,t)$ and $w_h(\cdot,t)$ into $\bfu\t \in \R^N$ and $\bfw\t \in \R^N$, respectively, and using the mass matrix $\bfM|_{ij} = m(\phi_j,\phi_i)$, stiffness matrix $\bfA|_{ij} = a(\phi_j,\phi_i)$, and non-linear term $\bfW'(\bfu\t)|_j = m_h(W'(u_h(\cdot,t)),\phi_j)$ the problem \eqref{eq:semi-discrete problem - pre} is equivalently written as:
\begin{align*}
	\bfM \dot\bfu\t + \bfA \bfw\t = &\ 0 , \\
	\bfM \bfw\t - \bfA \bfu\t = &\ \bfW'(\bfu\t) .
\end{align*}
Therefore, $\bfu\t$ is the solution of the ODE system $\bfM \dot\bfu\t + \bfA \bfM^{-1} (\bfA \bfu\t + \bfW'(\bfu\t)) = 0$. Since $\bfW'$ is locally Lipschitz continuous by assumption, by standard ODE theory there exists a unique short-time solution $\bfu\t \in C^1([0,T_0],\R^N)$. Therefore, using that 
$$
	\bfw\t = \bfM^{-1} \big(\bfA \bfu\t + \bfW'(\bfu\t) \big) ,
$$
we obtain that $\bfw\t$ is also $C^1$ in time.
	
To extend the solutions until the final time $T$ we use an energy bound. By the analogous argument which lead to \eqref{eq:the first energy estimate}, we obtain the analogous semi-discrete energy decay, and the energy estimate $\mathcal{E}_h(t) \leq c \mathcal{E}_h(0)$, which is then used to extend the existence of solutions onto $[0,T]$.
Translating these results back to the functional analytic setting, yields the stated results.

The final estimate between the initial energies hold by error estimates for the Ritz map \cite[Lemma~3.11 and 3.15]{dynbc}.
\end{proof}

\ebk

\subsection{A modified semi-discrete problem}
\label{section:modified semidiscrete problem}

Since the initial value for $w_h$ is not freely chosen, but is determined by the equations, it is to be expected that they will be involved in the error analysis. Our error analysis will provide an optimal order error bound in both norms $\|\cdot\|$ and $|\cdot|$, provided that this initial value is $O(h^2)$ close to the Ritz map of the exact initial value in the stronger norm. 

Since such an $O(h^2)$ estimate can only be verified in a weaker norm, we modify the second equation \eqref{eq:semi-discrete problem - b - pre} with a time-independent term to obtain optimal-order error estimates. The additional term corrects the initial value $\bar{w}_h(0) \in V_h$ (obtained from \eqref{eq:semi-discrete problem - b - pre} at $t=0$) to coincide with the Ritz map of the exact initial value $w_h^*(0) := \widetilde R_h w(0) \in V_h$.

Let $\vartheta_h \in V_h$ be given by
\begin{equation}
\label{eq:def theta}
	\vartheta_h = w_h^*(0) - \bar{w}_h(0),
\end{equation}
then the modified semi-discrete problem then reads: Find $u_h \in C^1([0,T],V_h)$ and $w_h \in L^2([0,T],V_h)$ such that, for time $0 < t\leq T$ and all $\vphi_h^u, \vphi_h^w \in V$,
\begin{subequations}
	\label{eq:semi-discrete problem}
	\begin{alignat}{2}
		\label{eq:semi-discrete problem - a}
		m_h(\dot u_h\t, \vphi_h^u) + a_h(w_h\t,\vphi_h^u) = &\ 0 , \\
		\label{eq:semi-discrete problem - b}
		m_h(w_h\t,\vphi_h^w) - a_h(u_h\t,\vphi_h^w) = &\ m_h(W'(u_h\t),\vphi_h^w) + m_h(\vartheta_h,\vphi_h^w) ,
	\end{alignat}
\end{subequations}

The initial data for \eqref{eq:semi-discrete problem - a} is still $u_h(0) = u_h^0 = \widetilde R_h u^0 \in V_h$.

\bbk Well-posedness results analogous to Proposition~\ref{proposition:semi-discrete well-posedness} hold for the modified semi-discrete problem \eqref{eq:semi-discrete problem} as well. \ebk 

The initial data for $w_h$, obtained from \eqref{eq:semi-discrete problem - b}, is 
\begin{align*}
	m_h(w_h(0),\vphi_h^w) = &\ a_h(u_h(0),\vphi_h^w) + m_h(W'(u_h(0)),\vphi_h^w) + m_h(\vartheta_h,\vphi_h^w) \\
	= &\ m_h( w_h^*(0) ,\vphi_h^w) ,
\end{align*}
via \eqref{eq:semi-discrete problem - b - pre}.

The advantage of the modified system, is that the errors in the initial data for $w_h$ are included into the problem similarly to a defect, which allows for feasible weaker norm estimate of this error. Note that for the linear case, this is nothing else but shifting the solutions to a particular initial value using a constant inhomogeneity.

\section{Main results: optimal-order semi-discrete error estimates}
\label{section:main results}

We are now able to state the main theorem of this paper.

\begin{theorem}
	\label{theorem:semidiscrete convergence}
	Let $u$ and $w$ be sufficiently smooth solutions to the Cahn--Hilliard equation with Cahn--Hilliard-type dynamic boundary conditions \eqref{eq:CH system}, sufficient regularity assumptions are \eqref{eq:sufficient regularity}.
	
	Then there exists an $h_0 > 0$ such that for all $h \leq h_0$ the error between the solutions $u$ and $w$ and the linear finite element semi-discretisations $u_h$ and $w_h$ of \eqref{eq:semi-discrete problem} satisfy the optimal-order uniform-in-time error estimates in both variables, for $0 \leq t \leq T$,
	\begin{alignat*}{3}
		&\ \|u_h^\ell(\cdot,s) - u(\cdot,s)\|_{L^2(\Om)}  + \|\ga (u_h^\ell(\cdot,s) - u(\cdot,s))\|_{L^2(\Ga)} \\
		&\ \quad + h  \Big( \|u_h^\ell(\cdot,s) - u(\cdot,s)\|_{H^1(\Om)}  + \|\ga (u_h^\ell(\cdot,s) - u(\cdot,s))\|_{H^1(\Ga)} \Big) \leq C h^2, \\
		&\ \|w_h^\ell(\cdot,s) - w(\cdot,s)\|_{L^2(\Om)}  + \|\ga (w_h^\ell(\cdot,s) - w(\cdot,s))\|_{L^2(\Ga)} \\
		&\ \quad + h  \Big( \|w_h^\ell(\cdot,s) - w(\cdot,s)\|_{H^1(\Om)}  + \|\ga (w_h^\ell(\cdot,s) - w(\cdot,s))\|_{H^1(\Ga)} \Big) \leq C h^2 , 
	\end{alignat*}
	whereas for the time derivatives of the errors in $u$ satisfy, for $0 \leq t \leq T$,
	\begin{equation*}
		\begin{aligned}
			&\ \bigg( \int_0^t \big\| \partial_t (u_h^\ell(\cdot,s) - u(\cdot,s)) \big\|_{L^2(\Om)}^2  
			+ \big\| \partial_t (\ga (u_h^\ell(\cdot,s) - u(\cdot,s))) \big\|_{L^2(\Ga)}^2 \\
			&\ \quad + h  \Big( \big\| \partial_t (u_h^\ell(\cdot,s) - u(\cdot,s)) \big\|_{H^1(\Om)}^2  
			+ \big\| \partial_t (\ga (u_h^\ell(\cdot,s) - u(\cdot,s))) \big\|_{H^1(\Ga)}^2 \Big) \d s \bigg)^{1/2} \leq C h^2 .
		\end{aligned}
	\end{equation*}
	The constant $C > 0$ depends on Sobolev norms of the solutions, and \bbk exponentially \ebk on the final time $T$, but it is independent of $h$ and $t$.
\end{theorem}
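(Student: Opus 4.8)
The plan is the one announced in the introduction: split the error with the Ritz map $\Rh$, dispose of the projection part by approximation theory, and reduce everything to a geometry-free stability estimate for the remaining discrete error. Writing $u_h^\ell - u = \eu^\ell + (R_h u - u)$ and $w_h^\ell - w = \ew^\ell + (R_h w - w)$ with $\eu := u_h - \Rh u$ and $\ew := w_h - \Rh w$ in $V_h$, the projection parts $R_h u - u$, $R_h w - w$ and their time derivatives are $O(h^2)$ in $|\cdot|$ and $O(h)$ in $\|\cdot\|$ by the Ritz-map estimates recalled from \cite[Section~3.4]{dynbc} together with the $H^2$-regularity \eqref{eq:reg - ER adapt}; using the $h$-uniform equivalence of the discrete and lifted norms proved in Section~\ref{section:geometric approx errors}, it then suffices to bound $\eu$, $\ew$ and $\doteu$ in the discrete norms (by $Ch^2$, resp.\ $Ch$ in the stronger norm, and the stated averaged-in-time quantity for $\doteu$).

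\textbf{Consistency.} Inserting $\Rh u$ and $\Rh w$ into the modified scheme \eqref{eq:semi-discrete problem} defines the defects $\du, \dw \in V_h$. Converting the discrete bilinear forms evaluated at the Ritz maps into the continuous ones via the defining property \eqref{eq:Ritz map definition}, subtracting the exact weak equations \eqref{eq:abstract weak form}, and estimating the geometric discrepancies $m_h-m$ and $a_h-a$ on lifted functions, the interpolation and Ritz errors, and the nonlinear mismatch $W'(u) - W'(\Rh u)$ via a local Lipschitz bound (using that $u$ and $\Rh u$ are uniformly bounded), one obtains $\|\du\| + \|\dw\| + \|\dotdu\| + \|\dotdw\| \le Ch^2$ uniformly on $[0,T]$, the derivative bounds coming from the time-differentiated defect equations. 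The time-independent term $\vartheta_h$ of \eqref{eq:def theta} is only $O(h^2)$ in the weak norm $|\cdot|$ — this is precisely the reason for passing to the modified scheme — but since $\dot\vartheta_h = 0$ this is harmless, and, as observed after \eqref{eq:def theta}, the modification makes $\eu(0)=0$ and $\ew(0)=0$ exactly.

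\textbf{Stability.} Subtracting the defect equations from \eqref{eq:semi-discrete problem} yields the error equations $m_h(\doteu,\vphi_h) + a_h(\ew,\vphi_h) = -m_h(\du,\vphi_h)$ and $m_h(\ew,\vphi_h) - a_h(\eu,\vphi_h) = m_h(W'(u_h) - W'(\Rh u),\vphi_h) - m_h(\dw+\vartheta_h,\vphi_h)$. The crucial point, exactly as in the a priori identity \eqref{eq:the first energy estimate}, is that the coupling bilinear form $a_h$ enters the two equations antisymmetrically, so that testing the first with $\ew$ (resp.\ $\doteu$) and the second with $\doteu$ (resp.\ $\ew$) and adding cancels the dangerous mixed terms. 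I would therefore run a family of coupled energy estimates — testing with $\eu$ and $\ew$; with $\ew$ and $\doteu$; and, after differentiating the equations in time, with $\doteu$ and $\dotew$ (integrating by parts in time on the $m_h(\du,\dotew)$-type terms so that no strong norm of $\dotew$ is ever needed) — which together control $\|\eu(t)\|$, $|\ew(t)|$ and $\big(\int_0^t (|\doteu|^2 + h\|\doteu\|^2)\,\d s\big)^{1/2}$. The nonlinear terms are treated by local Lipschitz bounds on $W'$ and $W''$, using the identity $m_h(W''(\Rh u)\eu,\doteu) = \half\diff m_h(W''(\Rh u)\eu,\eu) - \half m_h(\partial_t(W''(\Rh u))\eu,\eu)$ to convert the quadratic nonlinear contribution into a time derivative plus a harmless remainder; all of this hinges on a uniform-in-time $L^\infty$ bound on $u_h^\ell$. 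That bound comes from a continuation argument: on the maximal subinterval of $[0,T]$ where $\|\eu\| \le h^\gamma$ with a fixed $\gamma \in (d/2,2)$, the inverse estimate gives $\|\eu^\ell\|_{L^\infty} \le Ch^{-d/2}|\eu| \to 0$ as $h\to0$, so $u_h^\ell$ stays in a fixed neighbourhood of $u$ on which $W'$ is Lipschitz; the energy estimates together with Gr\"onwall's inequality then return $\|\eu(t)\| \le C e^{CT} h^2$, which beats the bootstrap hypothesis once $h$ is small, so the subinterval is all of $[0,T]$. With $\eu(0)=\ew(0)=0$ and the consistency bounds this gives $\|\eu(t)\| + |\ew(t)| + \big(\int_0^t (|\doteu|^2 + h\|\doteu\|^2)\,\d s\big)^{1/2} \le C e^{CT} h^2$, and the triangle inequality with the projection estimates completes the proof.

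\textbf{Main obstacle.} The heart of the argument is the stability proof, carried out abstractly in Section~\ref{section:stability} following the idea introduced for Willmore flow in \cite{Willmore}: orchestrating the coupled energy estimates so that the antisymmetric structure removes the mixed terms at every order, arranging them so that $\dotew$ — whose strong norm the theorem does not control — never needs to be estimated, absorbing the non-quadratic nonlinearities through the $L^\infty$ bound, and closing the continuation argument uniformly in $t$. The consistency analysis of Section~\ref{section:consistency}, although technically long, is routine once the geometric error estimates and the regularity \eqref{eq:reg - ER adapt} are in hand.
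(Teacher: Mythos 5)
Your proposal is correct and follows essentially the same route as the paper: the Ritz-map error decomposition, dual energy estimates exploiting the anti-symmetric coupling (including differentiating the second error equation in time and integrating by parts on the $m_h(d_h^u,\dot e_h^w)$-type term so that $\dot e_h^w$ is never needed in a strong norm), an $L^\infty$ bound for $e_h^u$ via an inverse estimate inside a continuation argument, Gr\"onwall, and finally the triangle inequality with the Ritz-map and norm-equivalence estimates; this is precisely the structure of Propositions~\ref{proposition:stability} and \ref{proposition:defect bounds} combined in Section~\ref{section:proof of main theorem}. One imprecision to fix: the defects cannot be bounded by $Ch^2$ in the strong norm $\|\cdot\|$ as you state; the duality argument you describe (and the paper's Proposition~\ref{proposition:defect bounds}) yields $O(h^2)$ only in the discrete dual norm $\|\cdot\|_{*,h}$ of \eqref{eq:star and H_h^-1 norm definition}, which is exactly what the stability estimate \eqref{eq:defect bounds - assumed} requires, so the architecture of your argument is unaffected once the norm is corrected.
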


\bbk 
For Theorem~\ref{theorem:semidiscrete convergence} sufficient regularity conditions are
\begin{subequations}
\label{eq:sufficient regularity}
\begin{equation}
\label{eq:sufficient regularity - a}
	\begin{aligned}
		u \in &\ C^1( [0,T],H^1(\Om) ) \cap H^2( [0,T],H^2(\Om) ) 
		\quad \text{with} \\
		\ga u \in &\ C^1( [0,T],H^1(\Ga)) \cap H^2( [0,T],H^2(\Ga) ) 
		, \\ \text{and} \\
		w \in &\ C( [0,T],H^1(\Om) ) \cap H^1( [0,T],H^2(\Om) ) \quad \text{with} \\
		\ga w \in &\ C( [0,T],H^1(\Ga) ) \cap H^1( [0,T],H^2(\Ga) ) .
	\end{aligned}
\end{equation}
On one hand, by standard theory, for $u \in H^2([0,T];X)$, the estimate $\max_{0\leq t \leq T} (\|u(t)\|_{X} + \|\dot u(t)\|_{X} ) \leq c \|u\|_{H^2(0,T;X)}$ holds, see, e.g.~\cite[Section~5.9.2]{Evans_PDE}; on the other hand the embedding $H^2 \subset L^\infty$ is continuous as well.
Therefore, \eqref{eq:sufficient regularity - a} implies that
\begin{equation}
\label{eq:sufficient regularity - b}
	\begin{alignedat}{3}
		%
		u \in &\ W^{1,\infty}( [0,T],L^\infty(\Om) ) 
		\quad \text{with} & \quad
		\ga u \in &\ W^{1,\infty}( [0,T],L^\infty(\Ga) ) .
	\end{alignedat}
\end{equation}
\end{subequations}
\ebk 

Our main theorem, Theorem~\ref{theorem:semidiscrete convergence}, will be proved by separately studying the questions of stability and consistency in Section~\ref{section:stability} and \ref{section:consistency}, respectively, and combining their results in Section~\ref{section:proof of main theorem}.

\section{Stability}
\label{section:stability}

%

\subsection{Error equations}
\label{section:error equations}

Let us consider the Ritz map of the exact solutions $u$ and $w$ of \eqref{eq:CH system}, which are denoted by
\begin{equation*}
u_h^*\t = \widetilde R_h u\t \in V_h, \andquad w_h^*\t = \widetilde R_h w\t \in V_h .
\end{equation*}
The Ritz maps of the exact solutions satisfy the system \eqref{eq:semi-discrete problem - pre} only up to some defects, \bbk i.e.~the bulk--surface finite element residuals, \ebk $d_h^u$ and $d_h^w$ in $V_h$:
\begin{subequations}
	\label{eq:defect definition}
	\begin{align}
	\label{eq:defect definition - u}
	m_h(\dot u^*_h\t, \vphi_h^u) + a_h(w^*_h\t,\vphi_h^u) = &\ m_h(d_h^u\t,\vphi_h^u) , \\
	\label{eq:defect definition - w}
	m_h(w_h^*\t,\vphi_h^w) - a_h(u_h^*\t,\vphi_h^w) = &\ m_h(W'(u_h^*\t),\vphi_h^w) + m_h(d_h^w\t,\vphi_h^w),
	\end{align}
\end{subequations}
with initial data as the Ritz map of the exact initial data, i.e.~$u_h^*(0) = \widetilde R_h u^0$.

The errors between the semi-discrete solutions and the Ritz maps of the exact solutions are denoted by $e_h^u = u_h - u_h^*$ and $e_h^w = w_h - w_h^*$ in $V_h$. 
By subtracting \eqref{eq:defect definition} from \eqref{eq:semi-discrete problem} we obtain that the errors $e_h^u$ and $e_h^w$ satisfy the following error equations:
\begin{subequations}
	\label{eq:error equations}
	\begin{align}
	\label{eq:error equations - u}
	m_h(\dot e_h^u\t, \vphi_h^u) + a_h(e_h^w\t,\vphi_h^u) = &\ - m_h(d_h^u\t,\vphi_h^u) , \\
	m_h(e_h^w\t,\vphi_h^w) - a_h(e_h^u\t,\vphi_h^w) = &\ m_h(W'(u_h\t) - W'(u_h^*\t),\vphi_h^w) 
	\nonumber \\
	\label{eq:error equations - w}
	&\
	+ m_h(\vartheta_h,\vphi_h^w) - m_h(d_h^w\t,\vphi_h^w) ,
	\end{align}
\end{subequations}
with vanishing initial values $e_h^u(0) = 0$ and $e_h^w(0) = 0$. Since, $e_h^u(0)$ and $e_h^w(0)$ satisfies the error equation \eqref{eq:error equations - w} at $t = 0$, we obtain that
\begin{equation}
\label{eq:vartheta and d_w equivalence}
	\vartheta_h = d_h^w(0) .
\end{equation}  

The defects will be estimated using a discrete dual norm on the space $V_h$ defined by
\begin{equation}
\label{eq:star and H_h^-1 norm definition}
	\|d_h\|_{*,h} =  \sup_{0\neq v_h \in V_h} \frac{m(d_h,v_h^\ell)}{\|v_h\|_h}. 
\end{equation}
It is easy to see that, as in the continuous case, there exist constants $c,C > 0$ (independent of $h$) such that
\begin{align*}
	c \|v_h\|_{*,h} \leq |v_h|_h \leq C \|v_h\|_h .
\end{align*}

\subsection{Stability bounds}
\label{section:stability proof}

\begin{proposition}
	\label{proposition:stability}
	For sufficiently smooth solutions, e.g.~\eqref{eq:sufficient regularity}, and assuming that the defects satisfy
	\begin{align}
	\label{eq:defect bounds - assumed}
	\|d\t\|_{\ast,h} \leq &\ c h^2 \quadfor d = d_h^u, \dot d_h^u, d_h^w, \dot d_h^w, \ \text{ and for } \, 0 \leq t \leq T .
	\end{align}
	
	Then there exists $h_0 > 0$ such that the following stability estimate holds for all $h \leq h_0$ and $0 \leq t \leq T$:
	\begin{equation}
	\label{eq:stability bound}
	\begin{aligned}
	&\ \|e_h^u\t\|_h^2 + \|e_h^w\t\|_h^2 + \int_0^t \|\dot e_h^u(s)\|_h^2 \d s + \int_0^t \|e_h^w(s)\|_h^2 \d s \\
	\leq &\ C \Big( \|d_h^u(0)\|_{*,h}^2 + \|d_h^u\t\|_{*,h}^2 + t \|d_h^w(0)\|_{*,h}^2
	\\ &\ \phantom{\ C \Big( }
	+ \int_0^t \!\!\! \big( \|d_h^u(s)\|_{*,h}^2 + \|\dot d_h^u(s)\|_{*,h}^2 + \|d_h^w(s)\|_{*,h}^2 + \|\dot d_h^w(s)\|_{*,h}^2 \big) \d s \Big) ,
	\end{aligned}
	\end{equation}
	where the constant $C > 0$ is independent of $h$ and $t$, but depends \bbk exponentially \ebk on the final time $T$.
\end{proposition}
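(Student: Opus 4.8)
The plan is to deduce \eqref{eq:stability bound} from a small collection of energy estimates for the error equations \eqref{eq:error equations - u}--\eqref{eq:error equations - w}, in the spirit of the continuous energy identity \eqref{eq:the first energy estimate} (the structure of this argument goes back to the stability proof for Willmore flow in \cite{Willmore}), carried out inside a continuation (bootstrap) argument. The role of the bootstrap is to supply an $L^\infty$ bound for $u_h$, which is the only way to treat the nonlinear term $W'(u_h)-W'(u_h^*)$ under the mere local Lipschitz assumption. Concretely, fix an exponent $\alpha$ with $\frac{d}{2}-1<\alpha<2$ and let $t^*\in[0,T]$ be the largest time such that $\|e_h^u(s)\|_h\le h^{\alpha}$ for all $s\le t^*$. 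On $[0,t^*]$, an inverse inequality of the type $\|e_h^u\|_{L^\infty(\Om_h)}\le Ch^{1-d/2}\|e_h^u\|_h$, combined with the $L^\infty$-error estimate for the Ritz map (Section~\ref{section:consistency}) and the $W^{1,\infty}$-regularity \eqref{eq:sufficient regularity - b} of $u$, yields $\|u_h\|_{L^\infty}+\|u_h^*\|_{L^\infty}+\|\dot u_h^*\|_{L^\infty}\le C$ for $h$ small, so that $W'$ and $W''$ are Lipschitz on the range of their arguments. I then prove \eqref{eq:stability bound} on $[0,t^*]$; with the assumed defect bounds \eqref{eq:defect bounds - assumed} this gives $\|e_h^u(t)\|_h\le Ch^2<h^{\alpha}$ for all $h\le h_0$, and a standard continuation argument forces $t^*=T$.

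\textbf{The energy estimates.} The first and decisive estimate exploits the anti-symmetry: testing \eqref{eq:error equations - u} with $\vphi_h^u=e_h^w$ and \eqref{eq:error equations - w} with $\vphi_h^w=\dot e_h^u$ and subtracting, the mixed terms $m_h(\dot e_h^u,e_h^w)$ cancel and one is left with
\[
  \half\diff\|e_h^u\|_{a_h}^2+\|e_h^w\|_{a_h}^2
  = -m_h(d_h^u,e_h^w)-m_h\big(W'(u_h)-W'(u_h^*),\dot e_h^u\big)-m_h(\vartheta_h,\dot e_h^u)+m_h(d_h^w,\dot e_h^u).
\]
The second estimate tests \eqref{eq:error equations - u} with $\vphi_h^u=\dot e_h^u$ and the time derivative of \eqref{eq:error equations - w} with $\vphi_h^w=e_h^w$, and adds them; now the mixed terms $a_h(\dot e_h^u,e_h^w)$ cancel and the time-independent $\vartheta_h$-term drops, giving control of $|\dot e_h^u|_h^2+\half\diff|e_h^w|_h^2$ by defect terms and by $m_h(\partial_t(W'(u_h)-W'(u_h^*)),e_h^w)$. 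A third, low-order estimate -- testing \eqref{eq:error equations - u} with $\vphi_h^u=e_h^u$ and using \eqref{eq:error equations - w} with $\vphi_h^w=e_h^w$ to rewrite $a_h(e_h^w,e_h^u)$ -- produces $\half\diff|e_h^u|_h^2+|e_h^w|_h^2$ in terms of manageable quantities, including $m_h(\vartheta_h,e_h^w)$; the remaining norm pieces ($\|e_h^w(t)\|_{a_h}$, $\|\dot e_h^u\|_{a_h}$) are then recovered by re-inserting suitable test functions into \eqref{eq:error equations - u}--\eqref{eq:error equations - w} and their time derivatives and invoking the bounds already obtained.

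\textbf{Closing the estimate.} Each right-hand side is estimated by Cauchy--Schwarz -- for the defect terms through the dual pairing bounded by $\|d_h\|_{*,h}\|v_h\|_h$, for the nonlinear terms through the Lipschitz continuity of $W'$, $W''$ and the $L^\infty$ bounds (e.g.~$|m_h(W'(u_h)-W'(u_h^*),\dot e_h^u)|\le L\,|e_h^u|_h\,|\dot e_h^u|_h$) -- and then by Young's inequality. Choosing the weights in the linear combination of the three estimates small enough, every occurrence of $\|e_h^w\|_{a_h}$, $|\dot e_h^u|_h$ and $\|e_h^u(t)\|_h$ on the right is absorbed into the left-hand sides; the $\vartheta_h$-terms are treated either by integrating $\int_0^t m_h(\vartheta_h,\dot e_h^u)=m_h(\vartheta_h,e_h^u(t))$ in time (using $e_h^u(0)=0$) followed by Young, or directly by Young -- which, together with $\vartheta_h=d_h^w(0)$, cf.~\eqref{eq:vartheta and d_w equivalence}, produces the $t\|d_h^w(0)\|_{*,h}^2$ contribution; the non-absorbable defect terms land precisely on the right of \eqref{eq:stability bound} (the $\|d_h^u(t)\|_{*,h}^2$ term arising from integrating $\int_0^t m_h(d_h^u,\dot e_h^u)$ by parts in time). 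What remains is a Gronwall-type differential inequality $y'(t)\le Cy(t)+C\,(\text{defect data})$ with $y(0)=0$, whose integration yields \eqref{eq:stability bound} with a constant growing exponentially in $T$, and this closes the continuation argument.

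\textbf{Main obstacle.} The chief difficulty is the circular dependence between the estimates and the control of the nonlinearity: the three energy identities are only valid once $u_h$ is bounded in $L^\infty$, but that bound is itself a consequence of the stability estimate -- this is why the continuation argument is indispensable and why the statement is only uniform-in-time for $h\le h_0$. A secondary difficulty is organisational: combining the several energy identities with correctly balanced weights so that every ``bad'' occurrence of a norm of $e_h^w$, $\dot e_h^u$ or $e_h^u(t)$ on a right-hand side is dominated by a left-hand side quantity, and bookkeeping the $\vartheta_h$ (i.e.~$d_h^w(0)$) contributions and the boundary-in-time terms from the integrations by parts, so that exactly the data on the right of \eqref{eq:stability bound} appears.
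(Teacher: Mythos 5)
Your proposal is correct and follows essentially the same route as the paper's proof: the anti-symmetric testings with $e_h^u$, $e_h^w$ and $\dot e_h^u$, an additional energy estimate based on the time-differentiated second error equation, a weighted combination with absorption of the $|\dot e_h^u|_h$- and $\|e_h^w\|_h$-terms, the identification $\vartheta_h=d_h^w(0)$ producing the $t\,\|d_h^w(0)\|_{*,h}^2$ contribution, Gronwall, and a continuation argument closed by an inverse estimate giving smallness of $\|e_h^u\|_{L^\infty(\Om_h)}$ (the paper phrases the bootstrap directly as $\|e_h^u\|_{L^\infty(\Om_h)}\le h^{1-d/4}$ rather than $\|e_h^u\|_h\le h^\alpha$, but the two are interchangeable here). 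The only point where you are thinner than the paper is the fourth energy estimate recovering $\int_0^t\|\dot e_h^u\|_{a_h}^2$ and $\|e_h^w(t)\|_{a_h}^2$ (testing the differentiated second equation with $\dot e_h^u$ and the first with $\dot e_h^w$), whose key technicality is rewriting the otherwise uncontrollable term $m_h(d_h^u,\dot e_h^w)$ by the product rule in time before integrating -- a device you do invoke for the time-derivative defect pairings, so this is a matter of detail rather than a missing idea.
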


The main idea of the stability proof is to exploit the anti-symmetric structure of the semi-discrete error equations \eqref{eq:error equations} and combine it with multiple energy estimates, and which was originally developed for Willmore flow \cite{Willmore}.
The key issue in the stability proof is to establish a uniform-in-time $L^\infty(\Om)$ norm bound for $e_h^u$, which is then used to estimate the non-linear terms. Such time-uniform $L^\infty(\Om)$ bounds are obtained from the time-uniform $H^1(\Om)$ norm error estimates via an inverse estimate. 
The stability proof is completely independent of any geometric approximation errors, which only enter the consistency analysis.

Since the stability proof is entirely performed in the semi-discrete abstract setting of Section~\ref{subsection:semi discrete abstract CH equations}, we strongly expect that it can be generalised to other problems which can be cast in the same setting, for further details see Section~\ref{section:generalisations}.

In Section~\ref{section:consistency} we will show that the assumed bounds \eqref{eq:defect bounds - assumed} indeed hold. Hence, together with the stability bound \eqref{eq:stability bound}, the consistency bounds imply the estimates for the errors $e_h^u$ and $e_h^w$, for $0 \leq t \leq T$,
\begin{align*}
\|e_h^u\t\|_h \leq &\ C h^2 \andquad \|e_h^w\t\|_h \leq  C h^2 .
\end{align*}

\begin{proof}	
	In order to achieve the uniform-in-time stability bound, two sets of energy estimates are needed. These energy estimates strongly exploit the anti-symmetric structure of \eqref{eq:error equations}. (i) In the first, a uniform-in-time energy estimate is proved for $e_h^u$, but which comes with a critical term involving $\dot e_h^u$. (ii) The second estimate uses the time derivative of \eqref{eq:error equations - w}, and leads to a bound of this critical term and also to a uniform-in-time bound for $e_h^w$. The combination of these two energy estimates will give the stated stability bound.
	The structure and basic idea of the proof is sketched in Figure~\ref{fig:energy estimates}.
	\begin{figure}[htbp]
		\begin{center}
			\includegraphics[width=\textwidth]{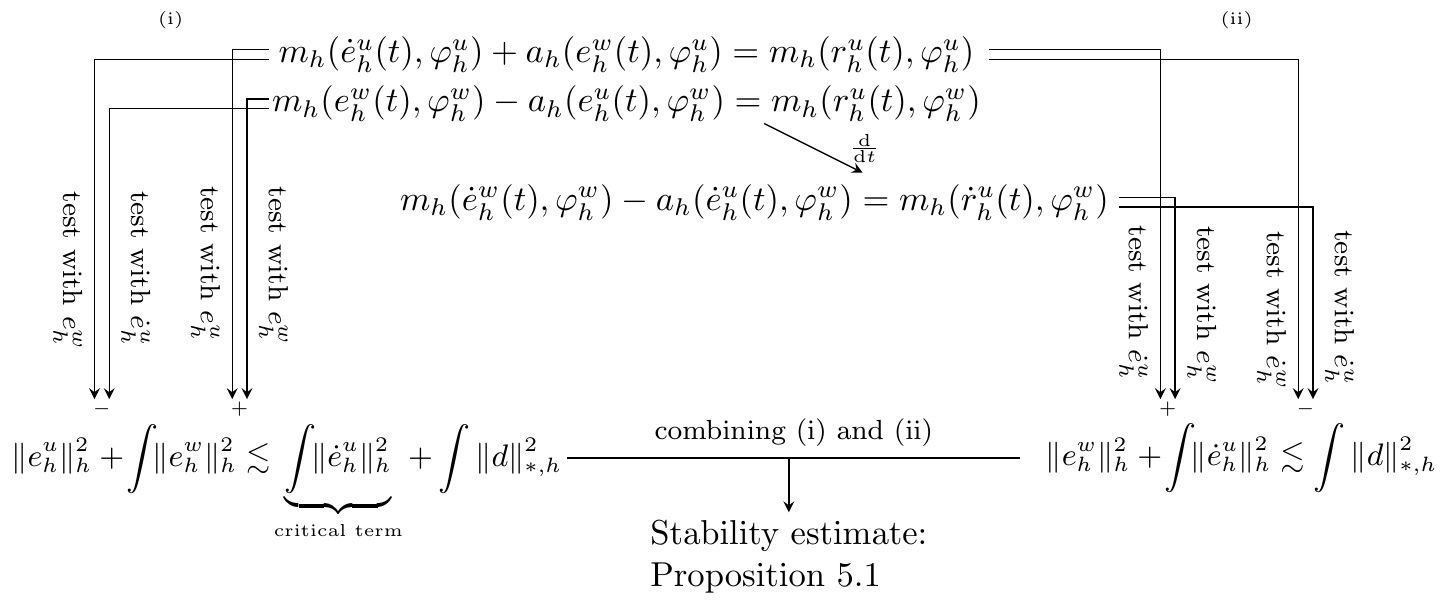}
			\caption{Sketch of the structure of the energy estimates of the stability proof. In the diagram $r_h^u$ and $r_h^w$ denote the right-hand sides of \eqref{eq:error equations - u} and \eqref{eq:error equations - w}.
			}
			\label{fig:energy estimates}
		\end{center}
	\end{figure}
	
	In order to handle the semi-linear term we first prove the stability bound on a time interval where the $L^\infty$ norm of $e_h^u$ is small enough, and then show that this time interval can be enlarged up to $T$.
	
	In the following $c$ and $C$ are generic constants that may take different values on different occurrences. Whenever it is possible, without confusion, we omit the argument $t$. By $\rho > 0$ we will denote a small number \bbk (chosen independently of $h$), \ebk used in Young's inequality, and hence we will often incorporate $h$ independent multiplicative constants into the those yet unchosen factors.

	\smallskip
	Let $t^* \in (0,T]$ be the maximal time such that the following estimate holds
	\begin{equation}
	\label{eq:L^infty bound}
	\|\ga_h e_h^u\t\|_{L^\infty(\Ga_h)} \leq \|e_h^u\t\|_{L^\infty(\Om_h)} \leq h^{1-d/4} \quadfor t \in [0,t^*] .
	\end{equation}
	Recall, that here we consider domains of dimension $d = 2$ or $3$, and note that for finite element functions the first inequality holds in general. We note that such a positive $t^*$ exists, since the initial error in $u$ is identically zero $e_h^u(0) = 0$. 

	\textit{Energy estimate (i):}
	We test \eqref{eq:error equations - u} with $e_h^u$ and \eqref{eq:error equations - w} with $e_h^w$, 
	then sum up the two equations, and use the symmetry of $a_h\ccdot$, to obtain
	\begin{equation}
	\label{eq:energy est - i - 1}
	\begin{aligned}
	m_h(\dot e_h^u,e_h^u) + m_h(e_h^w,e_h^w) = &\ m_h(W'(u_h) - W'(u_h^*),e_h^w) 
	- m_h(d_h^u,e_h^u) - m_h(d_h^w,e_h^w) + m_h(\vartheta_h,e_h^w) .
	\end{aligned}
	\end{equation}
	Similarly, we test \eqref{eq:error equations - u} with $e_h^w$ and \eqref{eq:error equations - w} with $\dot e_h^u$
	now subtracting the two equations and using the symmetry of $m_h(\cdot,\cdot)$, we obtain
	\begin{equation}
	\label{eq:energy est - i - 2}
	\begin{aligned}
	a_h(e_h^u ,\dot e_h^u)+a_h(e_h^w,e_h^w) = &\ m_h(W'(u_h) - W'(u_h^*),\dot e_h^u)  
	- m_h(d_h^u,e_h^w) + m_h(d_h^w,\dot e_h^u) - m_h(\vartheta_h,\dot e_h^u) .
	\end{aligned}
	\end{equation}
	Taking the linear combination of the equations \eqref{eq:energy est - i - 1} and \eqref{eq:energy est - i - 2}, we then use that for the bilinear form $a^* = a + m$ the following holds
	\begin{align}
	\label{eq:diff quadratic form}
	a_h^\ast(\dot e_h^u,e_h^u) = \Half \diff \|e_h^u\|_h^2 ,
	\end{align}
	and hence we obtain
	\begin{equation}
	\label{eq:energy est - i - pre estimates}
	\begin{aligned}
	\Half \diff \|e_h^u\|_h^2 + \|e_h^w\|_h^2 = &\ m_h(W'(u_h) - W'(u_h^*),e_h^w)  
	+ m_h(W'(u_h) - W'(u_h^*),\dot e_h^u) \\
	&\ + m_h(d_h^u,e_h^u) + m_h(d_h^u,e_h^w)  
	+ m_h(d_h^w,\dot e_h^u) - m_h(d_h^w,e_h^w) \\
	&\  + m_h(\vartheta_h,e_h^w) -  m_h(\vartheta_h,\dot e_h^u) .
	\end{aligned}
	\end{equation}
	
	Now we estimate all the terms on the right-hand side separately.
	
	For the non-linear terms, using the bound \eqref{eq:L^infty bound} together with the local Lipschitz continuity of the functions $W_\Om'$ and $W'_\Ga$, we obtain 
	\begin{equation}
	\label{eq:nonlinear term estimates - i - 1}
	\begin{aligned}
	m_h(W'(u_h) - W'(u_h^*),e_h^w) \leq &\ c |e_h^u|_h |e_h^w|_h 
	\leq c |e_h^u|_h^2 + \frac18 |e_h^w|_h^2 .
	\end{aligned}
	\end{equation}
	Analogously, for the other non-linear term we obtain
	\begin{equation}
	\label{eq:nonlinear term estimates - i - 2}
	\begin{aligned}
	m_h(W'(u_h) - W'(u_h^*),\dot e_h^u) \leq &\ c |e_h^u|_h |\dot e_h^u|_h
	\leq c |e_h^u|_h^2 + \rho |\dot e_h^u|_h^2 .
	\end{aligned}
	\end{equation}
	In both cases for the last inequalities we have used Young's inequality (in the second case with a small, \bbk $h$-independent \ebk factor $\rho>0$ chosen later on).
	
	For the terms with defects, using the Cauchy--Schwarz and Young's inequalities (often with a small, \bbk $h$-independent \ebk factor $\rho>0$ chosen later on), we obtain
	\begin{equation}
	\label{eq:defect term estimates - i}
	\begin{aligned}
	&\ 
	m_h(d_h^u,e_h^u) + m_h(d_h^u,e_h^w) 
	+ m_h(d_h^w,\dot e_h^u) - m_h(d_h^w,e_h^w) \\
	\leq &\ 
	\|d_h^u\|_{*,h} \|e_h^u\|_h + \|d_h^u\|_{*,h} \|e_h^w\|_h 
	+ \|d_h^w\|_{*,h} \|\dot e_h^u\|_h + \|d_h^w\|_{*,h} \|e_h^w\|_h \\
	\leq &\ c \|e_h^u\|_h^2 + \rho \|\dot e_h^u\|_h^2 + \frac18 \|e_h^w\|^2  
	+ c \Big( \|d_h^u\|_{*,h}^2 + \|d_h^w\|_{*,h}^2 \Big) .
	\end{aligned}
	\end{equation}
	
	The terms involving the correction $\vartheta_h$ are estimated similarly, in combination with \eqref{eq:vartheta and d_w equivalence}, as
	\begin{equation}
	\label{eq:vartheta term estimates - i}
	\begin{aligned}
	m_h(\vartheta_h,e_h^w) - m_h(\vartheta_h,\dot e_h^u) 
	\leq &\ 
	\|\vartheta_h\|_{*,h} \|e_h^w\|_h + \|\vartheta_h\|_{*,h} \|\dot e_h^u\|_h \\
	\leq &\ \rho \|\dot e_h^u\|_h^2 + \frac18 \|e_h^w\|^2  
	+ c \|d_h^w(0)\|_{*,h}^2 .
	\end{aligned}
	\end{equation}

	Altogether, by plugging in the estimates \eqref{eq:nonlinear term estimates - i - 1}--\eqref{eq:vartheta term estimates - i} into \eqref{eq:energy est - i - pre estimates}, 
	and integrating from $0$ to $t \leq t^*$, (and multiplying by two), we obtain the first energy estimate
	\begin{equation}
	\label{eq:energy estimate - i}
	\begin{aligned}
	\|e_h^u\t\|_h^2 + \int_0^t \|e_h^w(s)\|_h^2 \d s 
	\leq &\ \rho \int_0^t\|\dot e_h^u(s)\|_h^2 \d s 
	+ c \int_0^t\|e_h^u(s)\|_h^2 \d s  
	\\ &\
	+ c \int_0^t \|d_h^u(s)\|_{*,h}^2 + \|d_h^w(s)\|_{*,h}^2 \d s + c t \|d_h^w(0)\|_{*,h}^2 .
	\end{aligned}
	\end{equation}
	Note that the time-independent correction term $\vartheta_h$ have disappeared.
	
	\textit{Energy estimate (ii):}
	To control the first term on the right-hand side of \eqref{eq:energy estimate - i} we will now derive an energy estimate, which includes this term on the left-hand side. To this end we first differentiate the second equation of \eqref{eq:error equations} with respect to time, \bbk recall that by Proposition~\ref{proposition:semi-discrete well-posedness} $w_h$ is $C^1$ in time, \ebk and obtain the following system:
	\begin{subequations}
		\label{eq:error equations - dt}
		\begin{align}
		\label{eq:error equations - dt - u}
		m_h(\dot e_h^u, \vphi^u)+a_h(e_h^w,\vphi^u) = &\ - m_h(d_h^u,\vphi^u) \\
		\label{eq:error equations - dt - w}
		m_h(\dot e_h^w,\vphi^w)-a_h(\dot e_h^u , \vphi^w) = &\ m_h \Big(\diff \big( W'(u_h\t) - W'(u_h^*\t) \big) ,\vphi_h^w \Big)  
		- m_h(\dot d_h^w,\vphi^w).
		\end{align}	
	\end{subequations}
	
	Testing the error equation system \eqref{eq:error equations - dt} in two sets as before, and then taking directly the linear combination of the obtained equations would not lead to a feasible energy estimate, due to a critical term involving $\dot e_h^u$. Therefore, we first estimate the two equations separately, integrate in time (in (a) and (b), respectively), and then take their \emph{weighted} combination (in (c)).

	(a) We test \eqref{eq:error equations - dt - u} with $\dot e_h^u$ and \eqref{eq:error equations - dt - w} with $e_h^w$, then sum up the two equations, and use the symmetry of $a_h\ccdot$, to obtain
	\begin{equation}
	\label{eq:energy est - ii - 1}
	\begin{aligned}
	m_h(\dot e_h^u, \dot e_h^u) + m_h(\dot e_h^w,e_h^w) = &\ m_h \Big(\diff \big( W'(u_h\t) - W'(u_h^*\t) \big) ,e_h^w \Big) \\
	&\ - m_h(d_h^u,\dot e_h^u) - m_h(\dot d_h^w,e_h^w) .
	\end{aligned}
	\end{equation}
	
	For the non-linear terms, using the bound \eqref{eq:L^infty bound} together with the local Lipschitz continuity of the functions $W_\Om',W'_\Ga$ and $W_\Om'',W''_\Ga$, 
	and using that $u_h^\theta = u_h^* + \theta (u_h - u_h^*) = u_h^* + \theta e_h^u$, hence $\d / \d \theta \ u_h^\theta = e_h^u$ and $\dot u_h^\theta = \theta \dot e_h^u$, then the by the calculation
	\begin{align*}
	m_h \Big(\diff \big( W'(u_h\t) - W'(u_h^*\t) \big) ,e_h^w \Big)
	= &\ m_h\Big(\diff W'(u_h),e_h^w\Big) - m_h\Big(\diff W'(u_h^*),e_h^w\Big) \\
	= &\ \int_0^1 \frac{\d}{\d \theta } m_h\Big(\diff W'(u_h^\theta),e_h^w\Big) \d \theta \\
	= &\ \int_0^1 \frac{\d}{\d \theta } m_h\Big(W''(u_h^\theta) \dot u_h^\theta,e_h^w\Big) \d \theta \\
	= &\ \int_0^1 m_h\big( W'''(u_h^\theta) e_h^u + W''(u_h^\theta) \dot e_h^u , e_h^w\big) \d \theta \\
	\leq &\ c \big( |e_h^u|_h + |\dot e_h^u| \big) |e_h^w|_h .
	\end{align*}
	Therefore, for the non-linear term in \eqref{eq:energy est - ii - 1} we obtain 
	\begin{equation}
	\label{eq:nonlinear term estimates - ii - 1}
	\begin{aligned}
	m_h \Big(\diff \big( W'(u_h\t) - W'(u_h^*\t) \big) ,e_h^w \Big) \leq &\ c \big( |e_h^u|_h + |\dot e_h^u|_h \big) |e_h^w|_h \\
	\leq &\ \rho |\dot e_h^u|_h^2 + c |e_h^u|_h^2 + c |e_h^w|_h^2 .
	\end{aligned}
	\end{equation}
	
	For the terms with defects, using the Cauchy--Schwarz and Young's inequalities (often with a small, \bbk $h$-independent \ebk factor $\rho>0$ chosen later on), we obtain
	\begin{equation}
	\label{eq:defect term estimates - ii - 1}
	\begin{aligned}
	- m_h(d_h^u,\dot e_h^u) - m_h(\dot d_h^w,e_h^w) \leq &\ 
	\|d_h^u\|_{*,h} \|\dot e_h^u\|_h + \|\dot d_h^w\|_{*,h} \|e_h^w\|_h \\
	\leq &\ \rho \|\dot e_h^u\|_h^2 + c \|e_h^w\|_h^2 + c \big( \|d_h^u\|_{*,h}^2 + \|\dot d_h^w\|_{*,h}^2 \big) .
	\end{aligned}
	\end{equation}
	
	Combining the above estimates for \eqref{eq:energy est - ii - 1}, using \eqref{eq:diff quadratic form}, integrating in time from $0$ to $t \leq t^*$, and multiplying by two, we altogether obtain
	\begin{equation}
	\label{eq:energy est - ii - 1 - post int}
	\begin{aligned}
	\int_0^t |\dot e_h^u(s)|_h^2 \d s + |e_h^w\t|_h^2
	\leq &\ \rho \int_0^t \|\dot e_h^u(s)\|_h^2 \d s 
	+ c \int_0^t \big( \|e_h^u(s)\|_h^2 + \|e_h^w(s)\|_h^2 \big) \d s \\
	&\ + c\int_0^t  \big( \|d_h^u(s)\|_{*,h}^2 + \|\dot d_h^w(s)\|_{*,h}^2 \big) \d s .
	\end{aligned}
	\end{equation}

	(b) We now test \eqref{eq:error equations - dt - u} with $\dot e_h^w$ and \eqref{eq:error equations - dt - w} with $\dot e_h^u$
	now subtracting the two equations and using the symmetry of $m_h(\cdot,\cdot)$, we obtain
	\begin{equation*}
	\begin{aligned}
	a_h(e_h^w,\dot e_h^w)+a_h(\dot e_h^u , \dot e_h^u) = &\ - m_h \Big(\diff \big( W'(u_h\t) - W'(u_h^*\t) \big) ,\dot e_h^u \Big) \\
	&\ - m_h(d_h^u,\dot e_h^w) + m_h(\dot d_h^w,\dot e_h^u) .
	\end{aligned}
	\end{equation*}
	The term $\dot e_h^w$ cannot be absorbed or controlled, therefore, by the product rule we rewrite it as follows:
	\begin{equation*}
	- m_h(d_h^u,\dot e_h^w) = - \diff m_h(d_h^u,e_h^w) + m_h(\dot d_h^u,e_h^w) .
	\end{equation*}
	The combination of the above two equations then gives
	\begin{equation}
	\label{eq:energy est - ii - 2}
	\begin{aligned}
	a_h(e_h^w,\dot e_h^w)+a_h(\dot e_h^u , \dot e_h^u) = &\ - m_h \Big(\diff \big( W'(u_h\t) - W'(u_h^*\t) \big) ,\dot e_h^u \Big) \\
	&\ - \diff m_h(d_h^u,e_h^w) + m_h(\dot d_h^u,e_h^w) + m_h(\dot d_h^w,\dot e_h^u) .
	\end{aligned}
	\end{equation}
	
	For the non-linear term in \eqref{eq:energy est - ii - 2}, analogously as before, we obtain
	\begin{equation}
	\label{eq:nonlinear term estimates - ii - 2}
	\begin{aligned}
	m_h \Big(\diff \big( W'(u_h\t) - W'(u_h^*\t) \big) , \dot e_h^u \Big) \leq &\ c \big( |e_h^u|_h + |\dot e_h^u| \big) |\dot e_h^u|_h \\
	\leq &\ c_0 |\dot e_h^u|_h^2 + c |e_h^u|_h^2 ,
	\end{aligned}
	\end{equation}
	with a particular ($h$ independent) constant $c_0 > 0$.
	
	The defect terms without a time derivative are bounded, similarly as before, by 
	\begin{equation}
	\label{eq:defect term estimates - ii - 2}
	\begin{aligned}
	m_h(\dot d_h^u,e_h^w) + m_h(\dot d_h^w,\dot e_h^u) \leq &\ 
	\|\dot d_h^u\|_{*,h} \|e_h^w\|_h + \|\dot d_h^w\|_{*,h} \|\dot e_h^u\|_h \\
	\leq &\ \rho \|\dot e_h^u\|_h^2 + c \|e_h^w\|^2 + c \big( \|\dot d_h^u\|_{*,h}^2 + \|\dot d_h^w\|_{*,h}^2 \big) .
	\end{aligned}
	\end{equation}
	
	
	Combining the above estimates for \eqref{eq:energy est - ii - 2}, using \eqref{eq:diff quadratic form} for the bilinear form $a_h\ccdot$, integrating in time from $0$ to $t \leq t^*$, and multiplying by two, we altogether obtain
	\begin{equation*}
	\begin{aligned}
	\int_0^t \|\dot e_h^u(s)\|_{a_h}^2 \d s + \half \|e_h^w\t\|_{a_h}^2 
	\leq &\ \half \|e_h^w(0)\|_{a_h}^2 
	+ c_0 \int_0^t |\dot e_h^u(s)|_h^2 \d s + \rho \int_0^t \|\dot e_h^u(s)\|_h^2 \d s\\
	&\ + c \int_0^t \big( \|e_h^u(s)\|^2 + \|e_h^w(s)\|^2 \big) \d s \\
	&\ - m_h(d_h^u\t,e_h^w\t)
	+ c \int_0^t \big( \|\dot d_h^u(s)\|_{*,h}^2 + \|\dot d_h^w(s)\|_{*,h}^2 \big) \d s .
	\end{aligned}
	\end{equation*}
	
	Finally, by estimating the pointwise defect terms similarly as, e.g., in \eqref{eq:defect term estimates - ii - 2}, we obtain
	\begin{equation}
	\label{eq:energy est - ii - 2 - post int}
	\begin{aligned}
	\int_0^t \|\dot e_h^u(s)\|_{a_h}^2 \d s + \half \|e_h^w\t\|_{a_h}^2 
	\leq &\ c_0 \int_0^t |\dot e_h^u(s)|_h^2 \d s + \rho \int_0^t \|\dot e_h^u(s)\|_h^2 \d s\\
	&\ + c \int_0^t \big( \|e_h^u(s)\|^2 + \|e_h^w(s)\|^2 \big) \d s \\
	&\ + \rho \|e_h^w\t\|_h^2 + c \|d_h^u\t\|_{*,h}^2 \\
	&\ + c \int_0^t \big( \|\dot d_h^u(s)\|_{*,h}^2 + \|\dot d_h^w(s)\|_{*,h}^2 \big) \d s .
	\end{aligned}
	\end{equation}
	
	(c) We now multiply \eqref{eq:energy est - ii - 1 - post int} by $2 c_0$ and \eqref{eq:energy est - ii - 2 - post int} by $1$, and take this weighted combination of the two inequalities. After collecting the terms, we obtain
	\begin{equation}
	\label{eq:energy est - ii - post sum}
	\begin{aligned}
	&\ 2 c_0 \int_0^t |\dot e_h^u(s)|_h^2 \d s + \int_0^t \|\dot e_h^u(s)\|_{a_h}^2 \d s 
	+ 2 c_0 |e_h^w\t|_h^2 + \half \|e_h^w\t\|_{a_h}^2 \\
	\leq &\  c_0 \int_0^t |\dot e_h^u(s)|_h^2 \d s + \rho \int_0^t \|\dot e_h^u(s)\|_h^2 \d s  
	+ \rho \|e_h^w\t\|_h^2 \\
	&\ + c \int_0^t \big( \|e_h^u(s)\|_h^2 + \|e_h^w(s)\|_h^2 \big) \d s \\
	&\ + c \big( \|d_h^u\t\|_{*,h}^2 + \|d_h^u(0)\|_{*,h}^2 \big) 
	+ c \int_0^t  \big( \|d_h^u(s)\|_{*,h}^2 + \|\dot d_h^u(s)\|_{*,h}^2 + \|\dot d_h^w(s)\|_{*,h}^2 \big) \d s . 
	\end{aligned}
	\end{equation}

	The first term with $\dot e_h^u$ on the right-hand side is directly absorbed into the first term on the left-hand side. After combining the norms on the left-hand side, the second term on the right-hand side is absorbed by choosing $\rho > 0$ small enough. We use a further absorption into the term $e_h^w\t$, and then divide both sides by $\min\{1/4,c_0\}$, which yields the second energy estimate
	\begin{equation}
	\label{eq:energy estimate - ii}
	\begin{aligned}
	\int_0^t \|\dot e_h^u(s)\|_h^2 \d s + \|e_h^w\t\|_h^2
	\leq &\ c \int_0^t \big( \|e_h^u(s)\|_h^2 + \|e_h^w(s)\|_h^2 \big) \d s \\
	&\ + c \big( \|d_h^u\t\|_{*,h}^2 + \|d_h^u(0)\|_{*,h}^2 \big) \\
	&\ + c \int_0^t \!\!\! \big( \|d_h^u(s)\|_{*,h}^2 + \|\dot d_h^u(s)\|_{*,h}^2 + \|\dot d_h^w(s)\|_{*,h}^2 \big) \d s . 
	\end{aligned}
	\end{equation}

	\textit{Combining the energy estimates:}
	The sum of the energy estimates \eqref{eq:energy estimate - i} and \eqref{eq:energy estimate - ii} gives  
	\begin{equation}
	\label{eq:energy estimate - pre Gronwall}
	\begin{aligned}
	&\ \|e_h^u\t\|_h^2 + \|e_h^w\t\|_h^2 + \int_0^t \|\dot e_h^u(s)\|_h^2 \d s + \int_0^t \|e_h^w(s)\|_h^2 \d s \\
	\leq &\ \rho \int_0^t\|\dot e_h^u(s)\|_h^2 \d s 
	+ c \int_0^t \big( \|e_h^u(s)\|_h^2 + \|e_h^w(s)\|_h^2 \big) \d s  + c t \|d_h^w(0)\|_{*,h}^2 \\
	&\ + c \big( \|d_h^u\t\|_{*,h}^2 + \|d_h^u(0)\|_{*,h}^2 \big) 
	+ c \int_0^t \!\!\! \big( \|d_h^u(s)\|_{*,h}^2 + \|\dot d_h^u(s)\|_{*,h}^2 + \|d_h^w(s)\|_{*,h}^2 + \|\dot d_h^w(s)\|_{*,h}^2 \big) \d s .
	\end{aligned}
	\end{equation}
	A final absorption, and Gronwall's inequality gives the stated stability estimate in $[0,t^*]$.
	
	It is left to show that in fact $t^*$ coincides with $T$ for sufficiently small $h \leq h_0$. We prove this by the following argument: By the proven stability bound and the assumed bounds \eqref{eq:defect bounds - assumed} the error $e_h^u\t$ in $[0,t^*]$ satisfies
	\begin{equation*}
	\|e_h^u\|_h \leq C h^2.
	\end{equation*}
	Then, by the inverse estimate, \cite[Theorem~4.5.11]{BrannerScott}, we have, for $t \in [0,t^*]$,
	\begin{equation}
	\label{eq:Linfty bound via inverse estimate}
	\begin{aligned}
	\|e_h^u\|_{L^\infty(\Om_h)} \leq &\ c h^{-d/2} \|e_h^u\|_{L^2(\Om_h)} \\
	\leq &\ c h^{-d/2} \|e_h^u\|_h \\
	\leq &\ c C h^{-d/2} h^2 \leq \half h^{1-d/4} ,
	\end{aligned}
	\end{equation}
	for sufficiently small $h$. Therefore, we can extend the bounds \eqref{eq:L^infty bound} beyond $t^*$, which contradicts the maximality of $t^*$ unless $t^* = T$. Hence, we have the stability bound \eqref{eq:stability bound} for $t\in[0,T]$.
	\end{proof}

\subsection{Generalisations}
\label{section:generalisations}

In this section we illustrate the versatility of the above stability analysis of Section~\ref{section:stability proof}. \bbk We use the generalised problem below to demonstrate the generality of the energy estimates we used in the stability proof, Proposition~\ref{proposition:stability}, detailing how such a more general problem would be handled (if well-posedness is provided). We also remark here that the employed abstract framework can be modified to a different setting, i.e.~the particular choice of the spaces $V$ and $H$, or the bilinear forms $a$ and $m$ are inessential. \ebk 

First, let us consider the standard Cahn--Hilliard equation \cite{CahnHilliard} in a smooth domain with homogeneous Neumann boundary conditions. Written as a second order system it reads: The functions $u,w: \Om \times [0,T] \to \R$ satisfying the PDEs \eqref{eq:CH system - u Om}--\eqref{eq:CH system - w Om} in $\Om$, which are endowed with the boundary conditions
\begin{equation}
\label{eq:CH homogeneous Neumann bc}
	\partial_\nu u =  \partial_\nu w = 0 \quad \text{on } \Ga .
\end{equation}
This problem can be cast in the abstract setting of Section~\ref{section:abstract formulation}, by setting $V = \{ v \in H^1(\Om) \mid \int_\Om u = 0 \}$ and $H = L^2(\Om)$, with the bilinear forms on these spaces:
\begin{equation*}
	a(u,v) = \int_\Om \nb u \cdot \nb v , \andquad
	m(u,v) = \int_\Om u v .
\end{equation*}
Therefore, the weak formulation to the Cahn--Hilliard equation with boundary conditions \eqref{eq:CH homogeneous Neumann bc} reads exactly as \eqref{eq:abstract weak form}.
Hence, with the analogous modifications, the bulk--surface finite element semi-discreti\-satsion can be written exactly as \eqref{eq:semi-discrete problem}. The proof of Proposition~\ref{proposition:stability} immediately holds.

A similar setting can be used for inhomogeneous Neumann, or Dirichlet, or mixed boundary conditions.

Let us now consider a \emph{generalised} semi-linear \bbk Cahn--Hilliard-type \ebk equation with Cahn--Hilliard-type dynamic boundary conditions, written in the abstract setting of Section~\ref{section:abstract formulation}: Find $u\in C^1([0,T],H) \cap L^2([0,T],V)$ and $w \in L^2([0,T],V)$ such that, for time $0 < t\leq T$ and all $\vphi^u, \vphi^w \in V$,
\begin{subequations}
	\label{eq:CH general nonlin}
	\begin{alignat}{2}
		\label{eq:CH general nonlin - u}
		m(\dot u\t,\vphi^u) + a(w\t,\vphi^u) = &\ m(f(u\t),\vphi^u) , \\
		m(w\t,\vphi^w) - a(u\t,\vphi^w) = &\ m(g(u\t),\vphi^w)  ,
	\end{alignat}
\end{subequations}
where the two nonlinear terms are defined similarly as \eqref{eq:nonlinear term def}.
Both nonlinearities may contain bulk and boundary terms, \bbk and assumed the same local Lipschitz conditions as $W$ before. \ebk The corresponding modified system (with $\vartheta_h$ as before) is analogous.

\bbk An example which would fit into this framework are the Cahn--Hilliard equation (still endowed with dynamic boundary conditions) with a proliferation term \cite[Section~3]{Miranville}, or \cite[Chapter~8]{Miranville_book}, i.e.~(in fourth-order strong form) $\dot u - \big( -\laplace u + g_\Om(u)\big) + f_\Om(u) = 0$ in $\Om$, and analogously on the boundary $\Ga$. The problem \eqref{eq:CH general nonlin} is just slightly less general than the fourth-order non-linear parabolic PDE in \cite{Dlotko} or \cite{Willmore}. Well-posedness results should be studied in the particular cases; for semi-discretisations, e.g., analogously to Proposition~\ref{proposition:semi-discrete well-posedness} (which directly shows short-time existence and time regularity). \ebk 

To this problem the proof of Proposition~\ref{proposition:stability} holds \emph{only} subject to the following modification \bbk (assuming sufficient regularity conditions): \ebk 
\begin{itemize}
	\item[-] The estimates of the nonlinear terms in the first error equation (corresponding to \eqref{eq:CH general nonlin - u}), are estimated analogously to \eqref{eq:nonlinear term estimates - i - 1} and \eqref{eq:nonlinear term estimates - i - 2}, etc.
\end{itemize}

The Cahn--Hilliard equation with Allen--Cahn-type dynamic boundary conditions \cite{CherfilsPetcuPierre_2010}, i.e.~the PDE system \eqref{eq:CH system - u Om}--\eqref{eq:CH system - w Om} in $\Om$ endowed with the dynamic boundary conditions
\begin{equation}
\label{eq:CH Allen-Cahn bc}
\begin{aligned}
\dot u = &\ \Delta_\Ga u - W'_\Ga(u) - \partial_\nu u , \\
\pa_\nu w = &\ 0 , 
\end{aligned}\qquad \text{on } \Ga ,
\end{equation}
unfortunately, does not fit into the abstract framework of this paper: compare the weak formulation (10)--(11) in \cite{CherfilsPetcuPierre_2010} with \eqref{eq:weak form}, or \eqref{eq:abstract weak form} above.
%

\section{Consistency}
\label{section:consistency}

The consistency analysis relies on  error estimates of the nodal interpolations in the bulk and on the surface, error estimates for the Ritz map, geometric approximation errors in the bilinear forms, and a technical result for estimating norms on a boundary layer. 

\subsection{Geometric errors}

Let us recall our assumptions on the bulk and the surface, and on their discrete counterparts: the bounded domain $\Om \subset\R^d$ ($d=2$ or $3$) has an (at least) $C^2$ boundary $\Gamma$; the quasi-uniform triangulation $\Om_h$ (approximating $\Om$) whose boundary $\Ga_h := \pa \Om_h$ is an interpolation of $\Ga$.

\subsubsection{Interpolation and Ritz map error estimates}

The piecewise linear finite element interpolation operator $\widetilde I_h v \in V_h$, with lift $I_h v = (\widetilde I_h v)^\ell \in V_h^\ell$ satisfies the following bounds.
\begin{lemma}
	\label{lemma:interpolation error est}
	For $v\in H^2(\Om)$, such that $\gamma v \in H^2(\Ga)$. The piecewise linear finite element interpolation satisfies the following estimates:
	\begin{enumerate}
		\item[(i)] Interpolation error in the \emph{bulk}; see \cite{Bernardi,ElliottRanner}:
		\begin{equation*}
		\|v - I_h v\|_{L^2(\Om)} + h \|\nb(v - I_h v)\|_{L^2(\Om)} \leq C h^2 \|v\|_{H^2(\Om)}.
		\end{equation*}
		\item[(ii)] Interpolation error on the \emph{surface}, for $d \leq 3$; see \cite{Dziuk88}:
		\begin{equation*}
		\|\gamma(v - I_h v)\|_{L^2(\Ga)} + h \|\nb_\Ga(v - I_h v)\|_{L^2(\Ga)}
		\leq Ch^2 \|\gamma v\|_{H^2(\Ga)}.
		\end{equation*}
	\end{enumerate}
\end{lemma}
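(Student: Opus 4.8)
The plan is to prove the two interpolation estimates by standard scaling arguments, treating the bulk and the surface cases in parallel but keeping in mind that the bulk estimate (i) concerns a flat, polyhedral approximation $\Om_h$ of the curved domain $\Om$, whereas the surface estimate (ii) concerns the interpolated polyhedral surface $\Ga_h$. In both cases the quantity on the left is expressed on the continuous domain via the lift $\cdot^\ell$, so the first reduction is to pass from the lifted quantities on $\Om$ (resp.\ $\Ga$) to the corresponding discrete quantities on $\Om_h$ (resp.\ $\Ga_h$), paying a factor $1 + Ch$ in the $L^2$- and $H^1$-norms; this geometric equivalence of norms under the lift is exactly the kind of estimate collected later in Section~\ref{section:geometric approx errors}, and I would simply cite it (or note that it follows from the bounds on the Jacobian of $G_h$ from \cite{ElliottRanner}). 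After this reduction it suffices to estimate $\|\widetilde v - \widetilde I_h \widetilde v\|_{L^2(\Om_h)} + h\|\nb(\widetilde v - \widetilde I_h\widetilde v)\|_{L^2(\Om_h)}$, where $\widetilde v$ is a suitable extension of $v$ to a neighbourhood containing $\Om_h$ (the normal extension, which preserves $H^2$-regularity by the $C^2$ smoothness of $\Ga$), and similarly on $\Ga_h$.

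Next I would invoke the classical Bramble--Hilbert / Dupont--Scott local interpolation theory on each element. On a reference element $\widehat E$ one has $\|\widehat w - \widehat{\mathcal I}\widehat w\|_{L^2(\widehat E)} + \|\nb(\widehat w - \widehat{\mathcal I}\widehat w)\|_{L^2(\widehat E)} \le C\,|\widehat w|_{H^2(\widehat E)}$ for the nodal linear interpolant $\widehat{\mathcal I}$, because linear polynomials are reproduced; transforming back to a physical simplex $E$ of diameter $\le h$ under the affine map $F_E$, and using the quasi-uniformity of the family $\calT_h$ to control the condition number of $DF_E$, yields $\|w - \widetilde I_h w\|_{L^2(E)} + h\|\nb(w - \widetilde I_h w)\|_{L^2(E)} \le Ch^2|w|_{H^2(E)}$. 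Squaring and summing over all elements $E \in \calT_h$ gives (i), with $\|v\|_{H^2(\Om)}$ appearing on the right via the boundedness of the extension operator; for (ii) one runs the identical argument over the surface simplices of $\Ga_h$, using that for $d \le 3$ the flat triangles (or segments) of $\Ga_h$ are quasi-uniform and that the surface gradient $\nbgh$ on a flat piece is just the tangential part of the full gradient, so the same reference-element estimate applies. The restriction $d \le 3$ in (ii) is precisely what guarantees $H^2 \hookrightarrow C^0$ on the surface so that nodal interpolation is well-defined; this is the classical setting of \cite{Dziuk88}.

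The main obstacle — though it is a technical rather than a conceptual one — is handling the mismatch between $\Om$ and $\Om_h$ near the boundary: the interpolant is defined using the extension $\widetilde v$ on the thin curved layer $(\Om_h \setminus \Om) \cup (\Om \setminus \Om_h)$, and one must ensure that restricting to this layer does not lose the $h^2$ rate. This is controlled by the facts that the layer has width $O(h^2)$ (since $\Ga_h$ interpolates $\Ga$ and $\Ga$ is $C^2$), that the normal extension $\widetilde v$ satisfies $\|\widetilde v\|_{H^2(\text{layer})} \le C\|v\|_{H^2(\Om)}$, and that on the boundary strip one can still use element-wise Bramble--Hilbert estimates. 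A clean way to organise this is to cite the corresponding results already established in \cite{ElliottRanner} (for the bulk) and \cite{Dziuk88} (for the surface), since the lemma explicitly attributes the two bounds to those references; so in the write-up I would keep the proof short, stating the reference-element estimate, the affine scaling, the summation over elements, and the passage to the continuous domain via the lift and the extension, and refer to \cite{ElliottRanner,Dziuk88,Bernardi} for the geometric details near $\Ga$.
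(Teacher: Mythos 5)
The paper gives no proof of this lemma at all: it simply recalls these bounds as known results from \cite{Bernardi,ElliottRanner} (bulk) and \cite{Dziuk88} (surface), and your sketch — reference-element Bramble--Hilbert estimate, affine scaling under quasi-uniformity, summation over elements, and passage between $\Om$, $\Om_h$ (resp.\ $\Ga$, $\Ga_h$) via the lift and an $H^2$-stable extension, with the $O(h^2)$ boundary-layer mismatch delegated to the cited references — is precisely the standard argument carried out in those references. So your proposal is correct and consistent with the paper's (citation-only) treatment; no gap to report.
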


From \cite[Lemma~3.11 and 3.15]{dynbc} (with $\beta = 1$ therein) we recall the following estimates for the error in the Ritz map. 
\newcommand{\vr}{v-R_hv}
\begin{lemma}
	\label{lemma:Ritz error est} 
	For any $v\in H^2(\Omega)$ with $\gamma v \in H^2(\Gamma)$ the error of the Ritz map \eqref{eq:Ritz map definition} satisfies the following bounds, for $h \leq h_0$,
	\begin{align*}
	\|\vr\|_{H^1(\Om)} +  \|\gamma(\vr)\|_{H^1(\Ga)} \leq &\ C h \, \big( \|v\|_{H^2(\Om)}  + \|\gamma v\|_{H^2(\Gamma)} \big) , \\
	\|\vr\|_{L^2(\Om)} +  \|\gamma(\vr)\|_{L^2(\Ga)} \leq &\ C h^2 \, \big( \|v\|_{H^2(\Om)}  + \|\gamma v\|_{H^2(\Gamma)} \big) , 
	\end{align*}
	where the constant $C$ is independent of $h$ and $v$.
\end{lemma}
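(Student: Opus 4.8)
This is a standard non-conforming Ritz-map estimate, so the plan is to derive the $H^1$-type bound by a C\'ea/Strang argument and the $L^2$-type bound by Aubin--Nitsche duality, relying on three inputs: the interpolation estimates of Lemma~\ref{lemma:interpolation error est}; the geometric perturbation estimates of the form $|a_h^\ast(v_h,w_h) - a^\ast(v_h^\ell,w_h^\ell)| \leq C h^2 \, \|v_h^\ell\| \, \|w_h^\ell\|$ together with the $h$-uniform equivalence $\|v_h^\ell\| \simeq \|v_h\|_h$ on the lifted finite element space (both announced for Section~\ref{section:geometric approx errors}, with constants depending on the curvature of the $C^2$-surface $\Ga$); and the $H^2$-elliptic regularity of the $a^\ast$-problem on the smooth domain, i.e.\ that $a^\ast(z,\vphi) = m(f,\vphi)$ for all $\vphi \in V$ has a solution with $\|z\|_{H^2(\Om)} + \|\ga z\|_{H^2(\Ga)} \leq C |f|$.

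\emph{Step 1: the $H^1$-type bound.} I would set $\chi_h := \widetilde R_h v - \widetilde I_h v \in V_h$ and start from $\|\chi_h\|_h^2 = a_h^\ast(\chi_h,\chi_h)$; expanding $a_h^\ast(\widetilde R_h v, \chi_h)$ via the defining identity \eqref{eq:Ritz map definition} and inserting $\pm a^\ast((\widetilde I_h v)^\ell,\chi_h^\ell)$ splits the right-hand side into the interpolation defect $a^\ast(v - I_h v,\chi_h^\ell)$, bounded by $Ch\,(\|v\|_{H^2(\Om)}+\|\ga v\|_{H^2(\Ga)})\,\|\chi_h^\ell\|$ through Lemma~\ref{lemma:interpolation error est}, and a purely geometric defect, bounded by $Ch^2\,(\|v\|_{H^2(\Om)}+\|\ga v\|_{H^2(\Ga)})\,\|\chi_h^\ell\|$ through the perturbation estimate (using also $\|I_h v\| \leq C(\|v\|_{H^2(\Om)} + \|\ga v\|_{H^2(\Ga)})$). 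Using the norm equivalence $\|\chi_h^\ell\| \simeq \|\chi_h\|_h$ to absorb one factor then gives $\|\widetilde I_h v - \widetilde R_h v\| \leq Ch(\cdots)$, and the triangle inequality with Lemma~\ref{lemma:interpolation error est} completes the first estimate.

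\emph{Step 2: the $L^2$-type bound.} With $e := \vr$, I would take $z\in V$ solving the dual problem $a^\ast(z,\vphi) = m(e,\vphi)$ for all $\vphi\in V$, so $|e|^2 = m(e,e) = a^\ast(e,z)$, where $z$ has the $H^2$-regularity above with $\|z\|_{H^2(\Om)}+\|\ga z\|_{H^2(\Ga)} \leq C|e|$. Writing $a^\ast(e,z) = a^\ast(e, z - I_h z) + a^\ast(e, (\widetilde I_h z)^\ell)$, the first term is bounded by $\|e\|\,\|z - I_h z\| \leq Ch^2(\|v\|_{H^2(\Om)}+\|\ga v\|_{H^2(\Ga)})\,|e|$ using Step~1 and Lemma~\ref{lemma:interpolation error est}; the second term, via \eqref{eq:Ritz map definition}, becomes the geometric defect $a_h^\ast(\widetilde R_h v, \widetilde I_h z) - a^\ast((\widetilde R_h v)^\ell, (\widetilde I_h z)^\ell)$, which the perturbation estimate bounds by $Ch^2\|R_h v\|\,\|I_h z\| \leq Ch^2(\|v\|_{H^2(\Om)}+\|\ga v\|_{H^2(\Ga)})\,|e|$. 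Dividing by $|e|$ yields the $L^2$-estimate.

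\emph{Expected main obstacle.} The routine part is the bookkeeping above; the substantive work sits in the two external inputs. The geometric perturbation estimates require a careful analysis of the Elliott--Ranner map $G_h$, of the $O(h^2)$-thin boundary layer between $\Om$ and $\Om_h$, and of the $O(h)$ error of the discrete normal $\nu_h$ of $\Ga_h$; and the $H^2$-elliptic regularity of the $a^\ast$-problem on a smooth bulk--surface domain is exactly where the $C^2$ (or stronger) regularity of $\Ga$ is genuinely used. These are the technical results borrowed from \cite{ElliottRanner} and \cite{dynbc}.
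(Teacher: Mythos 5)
The paper does not prove this lemma itself: it is quoted verbatim from \cite[Lemmas~3.11 and 3.15]{dynbc} (with $\beta=1$), and your energy-argument-plus-Aubin--Nitsche strategy is exactly the standard route taken in that reference, so in outline your argument is the right one and there is nothing to compare it against inside this paper.

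There is, however, one concrete gap in your write-up: the geometric input you assume, $|a_h^\ast(v_h,w_h)-a^\ast(v_h^\ell,w_h^\ell)|\le Ch^2\,\|v_h^\ell\|\,\|w_h^\ell\|$ for all $v_h,w_h\in V_h$, is not what Section~\ref{section:geometric approx errors} provides and cannot be expected to hold for the polyhedral bulk approximation. The available estimate (Lemma~\ref{lemma:geometric errors}) has an $O(h)$ term in which both factors are measured only on the boundary layer $B_h^\ell$, plus an $O(h^2)$ global term; the $O(h)$ layer term reflects the $O(h)$ deviation of the element maps $G_h$ on boundary elements (think of a nodal basis function at a boundary vertex, whose $H^1$-mass sits entirely in the layer), so it cannot simply be replaced by an $O(h^2)$ bound with global norms. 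Consequently, every occurrence of your ``geometric defect'' needs an extra step: the finite element factors at hand ($\widetilde I_h v$, $\widetilde I_h z$, $\widetilde R_h v$) are $H^1$-close to smooth functions, so by the triangle inequality, the interpolation bounds of Lemma~\ref{lemma:interpolation error est} (respectively the already proven $H^1$ Ritz bound), and the layer estimate of Lemma~\ref{lemma:layer est}, their gradients restricted to $B_h^\ell$ are of size $O(h^{1/2})$. This upgrades the $O(h)$ layer term to $O(h^{3/2})$ in your Step 1 (which suffices for the first-order $H^1$ bound) and to $O(h^2)$ in the duality argument of Step 2; note that in Step 2 the upgrade for $\|\nabla R_h v\|_{L^2(B_h^\ell)}$ uses the $H^1$ estimate from Step 1, so the order of the two steps genuinely matters. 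With this repair (and granting the $H^2$ regularity of the dual bulk--surface problem from \cite{ElliottRanner}), your proof goes through and coincides with the cited one.
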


\subsubsection{Geometric approximation errors}
\label{section:geometric approx errors}

The following technical result from {\cite[Lemma~6.3]{ElliottRanner}} helps to estimate norms on a layer of triangles around the boundary.
\begin{lemma}
	\label{lemma:layer est}
	For all $v\in H^1(\Om)$ the following estimate holds:
	\begin{equation}
	\|v\|_{L^2(B_h^\ell)} \leq C h^\Half \|v\|_{H^1(\Om)},
	\end{equation}
	where $B_h^\ell$ collects the lifts of elements which have at least two nodes on the boundary.
\end{lemma}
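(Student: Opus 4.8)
The plan is to reduce the estimate to a trace-type (Friedrichs-type) inequality on a tubular neighbourhood of $\Ga$ of width $O(h)$, together with a purely geometric observation that the lifted layer $B_h^\ell$ is contained in such a neighbourhood.

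\textbf{Geometric localisation.} Since the family $\calT_h$ is quasi-uniform with mesh width $h$, every element $E$ has diameter at most $ch$. If $E$ has (at least) two nodes on $\Ga_h$, then, because $\Ga_h$ interpolates $\Ga$ and hence lies within an $O(h^2)$ band of $\Ga$, the whole element $E$ satisfies $E \subset \{x : \mathrm{dist}(x,\Ga) \leq c' h\}$. The bulk lift $G_h$ from \eqref{eq:bulk mapping} displaces points by at most $O(h^2)$ (see \cite[Section~4]{ElliottRanner}), so there is an $h$-independent constant $c_0 > 0$ with
\begin{equation*}
	B_h^\ell \subset \mathcal{N}_\delta := \{ x \in \Om : \mathrm{dist}(x,\Ga) < \delta \}, \qquad \delta = c_0 h .
\end{equation*}
Hence it suffices to prove $\|v\|_{L^2(\mathcal{N}_\delta)} \leq C \delta^{1/2} \|v\|_{H^1(\Om)}$ for $\delta$ small.

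\textbf{Trace-type inequality on $\mathcal{N}_\delta$.} Because $\Ga$ is $C^2$ and $h$ (hence $\delta$) is small, the map $(p,s) \mapsto p - s\,\nu(p)$, $p \in \Ga$, $s \in [0,\delta]$, is a $C^1$ diffeomorphism onto $\mathcal{N}_\delta$ whose Jacobian is bounded above and below by $h$-independent constants. For $v \in C^1(\overline\Om)$, fixed $p$, and $s \in [0,\delta]$, the fundamental theorem of calculus along the inward normal ray gives $|v(p - s\nu(p))|^2 \leq |\ga v(p)|^2 + 2\int_0^\delta |v|\,|\nb v|(p - \tau\nu(p))\,\d\tau$. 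Integrating in $s \in [0,\delta]$ and then in $p \in \Ga$ against the bounded Jacobian, and applying the Cauchy--Schwarz inequality to the resulting double integral, yields
\begin{equation*}
	\|v\|_{L^2(\mathcal{N}_\delta)}^2 \leq C \delta \, \|\ga v\|_{L^2(\Ga)}^2 + C \delta \, \|v\|_{L^2(\mathcal{N}_\delta)} \, \|\nb v\|_{L^2(\Om)} .
\end{equation*}
Absorbing the last term with Young's inequality, using $\delta \leq 1$, and applying the standard trace inequality $\|\ga v\|_{L^2(\Ga)} \leq C\|v\|_{H^1(\Om)}$ gives $\|v\|_{L^2(\mathcal{N}_\delta)}^2 \leq C\delta \|v\|_{H^1(\Om)}^2$; the pointwise manipulations extend to all $v \in H^1(\Om)$ by density of $C^1(\overline\Om)$.

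\textbf{Conclusion.} Combining $B_h^\ell \subset \mathcal{N}_\delta$ with $\delta = c_0 h$ gives $\|v\|_{L^2(B_h^\ell)} \leq C h^{1/2} \|v\|_{H^1(\Om)}$, as claimed. The main obstacle is the geometric step: one must carefully check that the layer of elements with at least two boundary nodes, after lifting, sits inside an $O(h)$-tube around $\Ga$, which uses the interpolation property of $\Ga_h$, quasi-uniformity, and the $O(h^2)$ closeness of $G_h$ to the identity; the smallness assumption on $h$ is precisely what makes both the tube and its normal-coordinate parametrisation well defined.
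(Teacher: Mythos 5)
Your proof is correct. Note that the paper itself does not prove this lemma at all: it is quoted verbatim from \cite[Lemma~6.3]{ElliottRanner}, and the argument given there is essentially the one you reconstruct, namely (i) the layer of elements with boundary nodes lies, after lifting, in a one-sided tube of width $O(h)$ around $\Ga$, and (ii) a narrow-band (trace/Friedrichs-type) estimate $\|v\|_{L^2(\mathcal{N}_\delta)} \leq C\delta^{1/2}\|v\|_{H^1(\Om)}$ obtained by integrating along the normal fibres of the $C^2$ boundary. So you have not taken a different route; you have supplied the proof the paper outsources.

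Two small remarks to tighten the write-up. In the geometric step you do not need the $O(h^2)$ closeness of $\Ga_h$ to $\Ga$: since the vertices of $\Ga_h$ lie \emph{on} $\Ga$ by construction, a single boundary node together with $\mathrm{diam}(E) \leq ch$ (quasi-uniformity) already gives $E \subset \{\mathrm{dist}(\cdot,\Ga) \leq ch\}$, and the lift $G_h$ maps into $\overline\Om$ while moving points by $O(h)$ at most, so $B_h^\ell \subset \mathcal{N}_{c_0 h}$ follows directly. Second, you should state explicitly that $h \leq h_0$ is needed so that $\delta = c_0 h$ stays below the reach of the $C^2$ surface $\Ga$, i.e.\ below the width for which the normal map $(p,s) \mapsto p - s\nu(p)$ is a diffeomorphism with Jacobian bounded above and below; this is exactly the smallness assumption already made on the triangulation. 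With these minor adjustments the argument is complete, and the constant depends only on $\Ga$ and the quasi-uniformity constants, as required for its use in Lemma~\ref{lemma:geometric errors} and Proposition~\ref{proposition:defect bounds}.
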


The bilinear forms $a$ and $a_h$, and $m$ and $m_h$, from \eqref{eq:bilinear forms} and \eqref{eq:discrete bilinear forms}, satisfy the following geometric approximation estimate, proved in {\cite[Lemma~3.9]{dynbc}}.
\begin{lemma}
	\label{lemma:geometric errors}
	The bilinear forms \eqref{eq:bilinear forms} and their discrete counterparts \eqref{eq:discrete bilinear forms} satisfy the following estimates for $h \leq h_0$, for any $v_h,w_h \in V_h$, 
	\begin{align*}
	|a(v_h^\ell,w_h^\ell) - a_h(v_h,w_h)| \leq &\ Ch \|\nb v_h^\ell\|_{L^2(B_h^\ell)} \, \|\nb w_h^\ell\|_{L^2(B_h^\ell)} \\ 
	&\ + C h^2 \bigg(\|\nb v_h^\ell\|_{L^2(\Om)} \, \|\nb w_h^\ell\|_{L^2(\Om)} + \|\nb_\Ga v_h^\ell\|_{L^2(\Ga)} \, \|\nb_\Ga w_h^\ell\|_{L^2(\Ga)}\bigg),
	\\
	|m(v_h^\ell,w_h^\ell) - m_h(v_h,w_h)| \leq &\ Ch \|v_h^\ell\|_{L^2(B_h^\ell)} \, \|w_h^\ell\|_{L^2(B_h^\ell)} \\
	&\ + C h^2 \Big(\|v_h^\ell\|_{L^2(\Om)} \, \|w_h^\ell\|_{L^2(\Om)} + \|\gamma v_h^\ell\|_{L^2(\Ga)} \, \|\gamma w_h^\ell\|_{L^2(\Ga)}\Big) .
	\end{align*}
\end{lemma}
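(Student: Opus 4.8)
The plan is to estimate the bulk and the surface contributions to the differences $a(v_h^\ell,w_h^\ell)-a_h(v_h,w_h)$ and $m(v_h^\ell,w_h^\ell)-m_h(v_h,w_h)$ separately, and in each to transform the integrals over $\Om_h$ and $\Ga_h$ back onto $\Om$ and $\Ga$ via the change of variables induced by the bulk map $G_h$ of \eqref{eq:bulk mapping} and by the normal lift \eqref{eq:lift definition}. For the mass term one writes
\[
m(v_h^\ell,w_h^\ell) - m_h(v_h,w_h)
= \Big( \int_\Om v_h^\ell w_h^\ell - \int_{\Om_h} v_h w_h \Big)
+ \Big( \int_\Ga \ga v_h^\ell\, \ga w_h^\ell - \int_{\Ga_h} \ga_h v_h\, \ga_h w_h \Big),
\]
and, since $v_h\big(G_h^{-1}(\,\cdot\,)\big)=v_h^\ell$ by definition of the lift, the change of variables gives $\int_{\Om_h} v_h w_h = \int_\Om v_h^\ell w_h^\ell\, \mu_h$ with $\mu_h$ the reciprocal of the Jacobian of $G_h$ transported to $\Om$; analogously $\int_{\Om_h}\nb v_h\cdot\nb w_h = \int_\Om \big(D_h\nb v_h^\ell\big)\cdot\nb w_h^\ell$ for a matrix $D_h$ built from $DG_h$, and on $\Ga$, $\Ga_h$ one recovers the classical Dziuk surface-measure ratio $\delta_h$ and the tangential transformation matrix. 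The whole difference becomes $\int_\Om v_h^\ell w_h^\ell(\mu_h-1)$ plus a gradient analogue plus the corresponding surface terms.

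The decisive geometric facts, taken from \cite[Section~4 and Section~6]{ElliottRanner} in the bulk and from \cite{Dziuk88} (cf.\ also \cite[Section~3.3]{dynbc}) on the surface, are: $G_h$ is the identity on every element not meeting $\Ga_h$, so $\mu_h-1$ and $D_h-I$ are supported in the lifted boundary layer $B_h^\ell$; there one has $\|\mu_h-1\|_{L^\infty(B_h^\ell)}+\|D_h-I\|_{L^\infty(B_h^\ell)}\le C h$; and the finer expansion of these quantities, together with all surface factors $\delta_h-1$ and the tangential matrix minus the surface projection, is $\bigo(h^2)$ uniformly. Substituting these bounds and applying the Cauchy--Schwarz inequality — in $L^2(B_h^\ell)$ for the part carrying only the $\bigo(h)$ layer bound and in $L^2(\Om)$, $L^2(\Ga)$ for the $\bigo(h^2)$ part — yields exactly the two stated estimates; the layer norms $\|\,\cdot\,\|_{L^2(B_h^\ell)}$ are what survive precisely because the $\bigo(h)$ distortion on boundary elements cannot be improved for piecewise linear functions (against smooth arguments one could gain an extra factor $h^{1/2}$ via Lemma~\ref{lemma:layer est}).

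The main obstacle is establishing the sharp dichotomy — $\bigo(h)$ on $B_h^\ell$, $\bigo(h^2)$ away from $\Ga$ and for all surface quantities — for $\mu_h-1$, $D_h-I$ and their surface counterparts. This requires the explicit description of the curved boundary elements and of $F_e$, $G_h$ from \cite[Section~4]{ElliottRanner}, and a Taylor expansion of the bulk and surface Jacobians in the $\bigo(h^2)$ normal displacement between $\Ga_h$ and $\Ga$, using that its tangential derivative along an element is only $\bigo(h)$. Granting these pointwise bounds, the remainder is the routine splitting and Cauchy--Schwarz indicated above, and the surface $\bigo(h^2)$ terms are identical to the classical surface finite element geometric estimates. (We note that the statement is \cite[Lemma~3.9]{dynbc}, whose proof follows precisely this route.)
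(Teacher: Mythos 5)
Your proposal is correct and follows essentially the same route as the proof the paper relies on: the paper does not prove this lemma itself but cites \cite[Lemma~3.9]{dynbc}, whose argument is precisely the change of variables via $G_h$ and the normal lift, combined with the Elliott--Ranner bulk facts ($G_h=\mathrm{id}$ away from the boundary layer, $O(h)$ distortion of the Jacobian quantities on $B_h^\ell$) and the classical $O(h^2)$ surface geometric estimates of Dziuk, followed by Cauchy--Schwarz on the layer and globally. The only slight imprecision is your phrase that the ``finer expansion'' of $\mu_h-1$ and $D_h-I$ is $O(h^2)$ uniformly --- on boundary elements these quantities are only $O(h)$, which is exactly why the layer term survives --- but since you bound that contribution by the $O(h)$ layer term anyway, the argument is unaffected.
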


The combination of the two estimates of Lemma~\ref{lemma:geometric errors} yields a similar estimate between the bilinear forms $a^*$ and $a_h^*$.

As a consequence we also have the $h$-uniform equivalence of the norms $\|\cdot\|$ and $\|\cdot\|_h$ induced by the bilinear forms $a^*$ and $a_h^*$, respectively, of the norms $|\cdot|$ and $|\cdot|_h$ induced by the bilinear forms $m$ and $m_h$, respectively:
\begin{equation}
\label{eq:norm equivalence}
\|v_h^\ell\| \sim \|v_h\|_h \quad\hbox{and}\quad |v_h^\ell | \sim |v_h |_h
\quad\hbox{uniformly in $h$.} 
\end{equation}

\subsection{Defect bounds}

In this section we prove bounds for \bbk the bulk--surface finite element residuals \ebk and for their time derivatives, i.e.~we prove that condition \eqref{eq:defect bounds - assumed} of Proposition~\ref{proposition:stability} is satisfied.

\begin{proposition}
	\label{proposition:defect bounds}
	Let $(u,w)$ be a solution of \eqref{eq:CH system} that satisfies the regularity conditions \eqref{eq:sufficient regularity}. 	
	Then the defects $d_h^u(\cdot,t)$ and $d_h^w(\cdot,t) \in V_h$ from \eqref{eq:defect definition} and their time derivatives satisfy the bounds, for $0 \leq t \leq T$,
	\begin{align}
	\|d_h^u(\cdot,t)\|_{\ast,h} \leq &\ C h^2 \andquad \|\dot d_h^u(\cdot,t)\|_{\ast,h} \leq C h^2, \\
	\|d_h^w(\cdot,t)\|_{\ast,h} \leq &\ C h^2 \andquad \|\dot d_h^w(\cdot,t)\|_{\ast,h} \leq C h^2,
	\end{align}
	where the constant $C > 0$ depends on the final time $T$, on the Sobolev norms of the solution, but it is independent from $h$ and $t$.
\end{proposition}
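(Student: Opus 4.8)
The plan is to obtain each defect from its definition by comparing the equation satisfied by the Ritz maps $(u_h^\ast, w_h^\ast)$ to the weak formulation \eqref{eq:abstract weak form} satisfied by the exact solution, and to measure the resulting discrepancy in the discrete dual norm $\|\cdot\|_{\ast,h}$ by testing against an arbitrary $\vphi_h \in V_h$ with lift $\vphi_h^\ell \in V_h^\ell \subset V$. For the defect $d_h^u$, starting from \eqref{eq:defect definition - u} and inserting $m(\dot u, \vphi_h^\ell) + a(w, \vphi_h^\ell) = 0$, one writes
\begin{align*}
	m_h(d_h^u, \vphi_h) = &\ \big( m_h(\dot u_h^\ast, \vphi_h) - m(\dot u, \vphi_h^\ell) \big) + \big( a_h(w_h^\ast, \vphi_h) - a(w, \vphi_h^\ell) \big) .
\end{align*}
Each bracket is split further: I would insert $m(\widetilde R_h \dot u, \vphi_h^\ell)$ and $a^\ast(\widetilde R_h w, \vphi_h^\ell)$ (or rather use the defining relation \eqref{eq:Ritz map definition}) to separate a \emph{geometric} part — the difference between a continuous bilinear form and its discrete counterpart evaluated on lifted functions, controlled by Lemma~\ref{lemma:geometric errors} — from a \emph{Ritz/interpolation} part, namely $m(\dot u - R_h \dot u, \vphi_h^\ell)$, $a(w - R_h w, \vphi_h^\ell)$ and the corresponding $m$-term, which is controlled by Lemma~\ref{lemma:Ritz error est}. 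One must be slightly careful that $\widetilde R_h$ commutes with time differentiation, i.e.\ $\partial_t \widetilde R_h u = \widetilde R_h \dot u$ (immediate since the Ritz map is linear and time-independent), so that $\dot u_h^\ast = \widetilde R_h \dot u$, and that the regularity \eqref{eq:sufficient regularity} guarantees $\dot u, w \in H^2$ in space so Lemmas~\ref{lemma:interpolation error est}--\ref{lemma:Ritz error est} apply.

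For $d_h^w$ from \eqref{eq:defect definition - w} the same strategy applies, with the additional nonlinear term $m_h(W'(u_h^\ast), \vphi_h) - m(W'(u), \vphi_h^\ell)$. This I would handle by writing it as $\big(m_h(W'(u_h^\ast),\vphi_h) - m(W'(u_h^\ast)^\ell, \vphi_h^\ell)\big) + m\big((W'(u_h^\ast))^\ell - W'(u), \vphi_h^\ell\big)$; the first piece is again a geometric error (Lemma~\ref{lemma:geometric errors}), and for the second I use the local Lipschitz continuity of $W_\Om'$ and $W_\Ga'$ together with $|(u_h^\ast)^\ell - u| = |R_h u - u| \leq C h^2(\|u\|_{H^2(\Om)}+\|\ga u\|_{H^2(\Ga)})$ from Lemma~\ref{lemma:Ritz error est}, noting that $R_h u$ stays in a fixed $L^\infty$-ball around $u$ for $h$ small (via the $H^1$ Ritz bound and an embedding, or simply the $L^\infty$ bound on $u$ plus the smallness of the $L^2$/$H^1$ error after an inverse estimate) so the Lipschitz constant is uniform. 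Collecting terms and dividing by $\|\vphi_h\|_h$, using the norm equivalence \eqref{eq:norm equivalence} and Lemma~\ref{lemma:layer est} to absorb the $O(h)\|\cdot\|_{L^2(B_h^\ell)}$ contributions into $O(h^2)\|\cdot\|_{H^1}$, yields $\|d_h^u\|_{\ast,h}, \|d_h^w\|_{\ast,h} \leq C h^2$.

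For the time derivatives $\dot d_h^u$ and $\dot d_h^w$ I would differentiate \eqref{eq:defect definition} in time; since all bilinear forms and the Ritz map are time-independent, $\partial_t$ falls only on the solution arguments, giving expressions of exactly the same shape but with $\dot u, \dot w, \ddot u$ and $\diff W'(u) = W''(u)\dot u$ in place of $u, w, \dot u, W'(u)$. The same Lemmas then apply provided $\ddot u, \dot w \in H^2$ in space and $W''$ is locally Lipschitz — both guaranteed by \eqref{eq:sufficient regularity} and the standing assumptions on the potentials — so $\|\dot d_h^u\|_{\ast,h}, \|\dot d_h^w\|_{\ast,h} \leq C h^2$ follows identically, with $C$ depending on the stated Sobolev norms and hence on $T$. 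I expect the main obstacle to be the bookkeeping in the nonlinear term and its time derivative: one must carefully route the difference $W'((u_h^\ast)^\ell) - W'(u)$ (and $W''((u_h^\ast)^\ell)\,(\dot u_h^\ast)^\ell - W''(u)\dot u$) through a telescoping that produces only factors already controlled in $L^2$ by the Ritz estimates times factors bounded in $L^\infty$, and to justify the uniform-in-$h$ Lipschitz constants; the linear parts are routine consequences of Lemmas~\ref{lemma:interpolation error est}--\ref{lemma:geometric errors} exactly as in \cite{dynbc}.
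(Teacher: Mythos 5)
Your overall strategy (compare the Ritz-mapped solutions with the exact weak form, split into geometric errors plus Ritz errors, treat the nonlinearity via local Lipschitz continuity and an $L^\infty$ bound on $R_h u$ obtained from an inverse estimate, and differentiate in time for $\dot d_h^u,\dot d_h^w$) is the same as the paper's, but there is a genuine gap in how you dispose of the stiffness-form differences. As written, your decomposition leaves the terms $a(w - R_h w,\vphi_h^\ell)$ (and, for $d_h^w$, $a(u - R_h u,\vphi_h^\ell)$) to be ``controlled by Lemma~\ref{lemma:Ritz error est}''. That lemma only gives $\|w-R_hw\|_{H^1(\Om)}+\|\ga(w-R_hw)\|_{H^1(\Ga)}\le Ch$, so this term is bounded by $Ch\,\|\vphi_h\|_h$ and the dual norm \eqref{eq:star and H_h^-1 norm definition} would only come out as $O(h)$, not the claimed $O(h^2)$. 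You cannot rescue it by routing the $a$-difference through Lemma~\ref{lemma:geometric errors} and the layer estimate either: the $O(h)$ boundary-layer term there involves $\|\nb\vphi_h^\ell\|_{L^2(B_h^\ell)}$, and Lemma~\ref{lemma:layer est} requires $H^1$ regularity of the function whose $L^2(B_h^\ell)$ norm is taken, which fails for gradients of piecewise linear finite element functions.

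The step you gesture at in the parenthetical ``(or rather use the defining relation \eqref{eq:Ritz map definition})'' is in fact the essential algebraic move, and it must be carried out so that \emph{no} $a$-type Ritz error survives: since $a_h^\ast(\widetilde R_h w,\vphi_h)=a^\ast(w,\vphi_h^\ell)$ with $a^\ast=a+m$, one has
\begin{equation*}
a_h(\widetilde R_h w,\vphi_h)-a(w,\vphi_h^\ell)
= m(w,\vphi_h^\ell)-m_h(\widetilde R_h w,\vphi_h)
= -\big(m_h(\widetilde R_h w,\vphi_h)-m(R_h w,\vphi_h^\ell)\big)-m(R_h w-w,\vphi_h^\ell),
\end{equation*}
so the stiffness-form difference is converted entirely into a geometric error in the \emph{mass} form (which Lemma~\ref{lemma:geometric errors} plus Lemma~\ref{lemma:layer est} bounds by $Ch^2\|R_hw\|\,\|\vphi_h^\ell\|$, legitimately, since both arguments are $H^1$ functions) and an $L^2$-norm Ritz error (which is $O(h^2)$ by Lemma~\ref{lemma:Ritz error est}). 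This is exactly how the paper's proof proceeds, and it is why only $L^2$ Ritz estimates and mass-form geometric estimates ever appear there. With this correction your treatment of the nonlinear term (geometric part plus $m(W'(R_hu)-W'(u),\vphi_h^\ell)$, uniform Lipschitz constants via the $L^\infty$ bound \eqref{eq:Ritz map Linfty bound}-style argument, and an $H^1$-type bound on $W'(R_hu)$ for the boundary-layer term) and of the time-differentiated defects matches the paper and goes through.
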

\begin{proof}
	The proof is in the standard spirit of consistency estimates comparing the exact solutions to their Ritz maps, and uses geometric approximation errors from above. For such proofs see, e.g., \cite{DziukElliott_L2} for linear evolving surface PDEs, and \cite{dynbc,Hip17,dynbc_waves_L2} for problems with dynamic boundary conditions. Again, within the proof we omit the time dependencies. For more details to the present proof we refer to \cite{Harder_thesis}.

	We will first prove the consistency bounds for the defect in $u$ and its time derivative, and then, by using analogous techniques, we will show the same estimates for the defect in $w$ and its time derivative. 
	
	\emph{Bounds for $d_h^u$ and $\dot d_h^u$:}
	To estimate the defects in $u$, we start by subtracting \eqref{eq:abstract weak form - u} from \eqref{eq:defect definition - u} with $\vphi_h^\ell \in V_h^\ell$ and $\vphi_h \in V_h$, respectively, as test functions, then use the definition of the Ritz map \eqref{eq:Ritz map definition} to obtain
	\begin{equation}
	\label{eq:defect u - pre estimates}
	\begin{aligned}
	m_h(d_h^u,\phi_h^u) 
	= &\ m_h(\widetilde{R}_h \dot u, \vphi_h) - m(\dot u,\vphi_h^\ell) 
	\\	&\ 
	+ a_h(\widetilde{R}_h w,\vphi_h) - a(w,\vphi_h^\ell) \\
	= &\ \big( m_h(\widetilde{R}_h \dot u, \vphi_h) - m(R_h \dot u,\vphi_h^\ell) \big) + m(R_h \dot u - \dot u,\vphi_h^\ell) \\
	&\ - \big( m_h(\widetilde{R}_h w,\vphi_h) - m(R_h w,\vphi_h^\ell) \big) - m(R_h w - w,\vphi_h^\ell) ,
	\end{aligned}
	\end{equation}
	where for the last equality we added and subtracted the appropriate intermediate terms.
	The terms in the last two lines are estimated separately: the first terms by the geometric approximation estimates of Lemma~\ref{lemma:geometric errors} together with Lemma~\ref{lemma:layer est}, the second terms by a Cauchy--Schwarz inequality and then by the Ritz map error bounds from Lemma~\ref{lemma:Ritz error est}. Altogether we obtain
	\begin{align*}
	m_h(d_h^u,\vphi_h) \leq c h^2 \|R_h \dot u\| \, \|\vphi_h^\ell\| + c h^2 (\|\dot u\|_{H^2(\Om)} + \|\ga \dot u\|_{H^2(\Ga)}) \, |\vphi_h^\ell|.
	\end{align*}
	By using the $\|\cdot\|$ norm error estimates for the Ritz map within the first term here, we obtain:
	\begin{align*}
	\|R_h \dot u\| \leq \|R_h \dot u - \dot u\| + \|\dot u\| \leq (1 + c h) (\|\dot u\|_{H^2(\Om)} + \|\ga \dot u\|_{H^2(\Ga)}) .
	\end{align*}
	Altogether, after recalling the definition of the discrete dual norm \eqref{eq:star and H_h^-1 norm definition}, we obtain that the defect in $u$ is bounded by
	\begin{align*}
	\|d_h^u\|_{\ast,h} \leq ch^2 (\|\dot u\|_{H^2(\Om)} + \|\ga \dot u\|_{H^2(\Ga)}) .
	\end{align*}
	
	The time derivative of the defect $d_h^u$ is bounded by the same techniques. We take the time derivative of the equation \eqref{eq:defect u - pre estimates}, use that $\vphi_h \in V_h$ is time independent and that $\d / \d t \, (R_h \dot u ) = R_h \ddot u$, and then use the same techniques as above to obtain
	\begin{align*}
	\|\dot d_h^u\|_{\ast,h} \leq c h^2 (\|\ddot u\|_{H^2(\Om)} + \|\ga \ddot u\|_{H^2(\Ga)}) .
	\end{align*}
	
	\emph{Bounds for $d_h^w$ and $\dot d_h^w$:} 
	To estimate the defects in $w$ we use the same approach as for $u$. We start by subtracting \eqref{eq:abstract weak form - w} from \eqref{eq:defect definition - w} with $\vphi_h^\ell \in V_h^\ell$ and $\vphi_h \in V_h$, respectively, as test functions. Using again the definition of the Ritz map \eqref{eq:Ritz map definition} and we again add terms to gain the structure as before:
	\begin{equation}
	\label{eq:defect w - pre estimates}
	\begin{aligned}
	m_h(d_h^u,\phi_h^u) 
	= &\ \big( m_h(\widetilde{R}_h w,\vphi_h)  - m(R_h w,\vphi_h^\ell) \big) + m(R_h w - w,\vphi_h^\ell)  \\
	&\ + \big( m_h(\widetilde{R}_h u,\vphi_h) - m(R_h u,\vphi_h^\ell) \big) + m(R_h u - u,\vphi_h^\ell) \\
	&\ - \big( m_h(W'(\widetilde{R}_h u),\vphi_h) - m((W'(\widetilde{R}_h u))^\ell,\vphi_h^\ell) \big)
	- m(W'(R_h u) - W'(u),\vphi_h^\ell) ,
	\end{aligned}
	\end{equation}
	where for the non-linear term we have used the fact that, for an arbitrary function $f$ and for any $ v_h \in V_h$, there holds $f(v_h^\ell)=f(v_h\circ G_h^{-1})=(f\circ v_h)\circ G_h^{-1}=(f(v_h))^\ell$, with $G_h$ defined in \eqref{eq:bulk mapping}.
	
	The terms in the first two lines on the right-hand side are estimated exactly as before for $d_h^u$, as 
	\begin{align*}
	ch^2 (\|u\|_{H^2(\Om)} + \|\ga u\|_{H^2(\Ga)} + \|w\|_{H^2(\Om)} + \|\ga w\|_{H^2(\Ga)} ) .
	\end{align*}
	The remaining non-linear terms are bounded similarly as before. The first term by the geometric approximation estimates Lemma~\ref{lemma:geometric errors} together with Lemma~\ref{lemma:layer est} as
	\begin{align*}
	m_h(W'(\widetilde{R}_h u),\vphi_h) - m((W'(\widetilde{R}_h u))^\ell,\vphi_h^\ell) 
	\leq &\ c h^2 \|W'(R_h u)\| \|\vphi_h^\ell\| .
	\end{align*}
	The second term is estimated by a Cauchy--Schwarz inequality and then by the Ritz map error bounds from Lemma~\ref{lemma:Ritz error est}.
	\begin{align*}
	m(W'(R_h u) - W'(u),\vphi_h^\ell) \leq &\ |W'(R_h u) - W'(u)| \, |\vphi_h^\ell| .
	\end{align*}
	
	It is left to estimate the non-linear terms involving the Ritz map. We first establish a bound for $\|W'(R_h u)\|$, and start by decomposing this norm into its bulk and surface parts
	\begin{align*}
	\|W'(R_h u)\| \leq &\ \|\nb (W_\Om'(R_h u))\|_{L^2(\Om)} + \|W_\Om'(R_h u)\|_{L^2(\Om)} \\
	&\ + \|\nbg (W_\Ga'(\ga(R_h u)))\|_{L^2(\Om)} + \|W_\Ga'(\ga(R_h u))\|_{L^2(\Ga)} .
	\end{align*}
	The above terms on the right-had side are estimated separately, but by analogous techniques. For the first term we have
	\begin{align*}
	\|\nb (W_\Om'(R_h u))\|_{L^2(\Om)} 
	\leq &\ 
	\|W_\Om''(R_h u)\|_{L^\infty(\Om)} \|\nb R_h u\|_{L^2(\Om)} \\
	\leq &\ \|W_\Om''(R_h u)\|_{L^\infty(\Om)} \big( \|\nb R_h u - u\|_{L^2(\Om)} + \|u\|_{L^2(\Om)} \big) \\
	\leq &\ \|W_\Om''(R_h u)\|_{L^\infty(\Om)} (1 + c h^2) \|u\|_{H^2(\Om)} ,
	\end{align*}
	which is bounded by a constant, provided an $L^\infty(\Om)$ norm bound on $R_h u$. Under the same condition, the other three terms are also bounded independently of $h$, by a similar argument.
	
	To show a bound for $\|R_h u\|_{L^\infty(\Om)}$, we use an inverse estimate \cite[Theorem~4.5.11]{BrannerScott} (recall that $d=2$ or $3$) and the $L^\infty(\Om)$-stability of the finite element interpolation, which yield
	\begin{equation}
	\label{eq:Ritz map Linfty bound}
	\begin{aligned}
	\|R_h u\|_{L^\infty(\Om)} \leq &\ \|R_h u - I_h u\|_{L^\infty(\Om)} + \|I_h u\|_{L^\infty(\Om)} \\
	\leq &\ c h^{-d/2} \|R_h u - I_h u\|_{L^2(\Om)} + \|I_h u\|_{L^\infty(\Om)} \\
	\leq &\ c h^{-d/2} \|R_h u - u\|_{L^2(\Om)} + c h^{-d/2} \|u - I_h u\|_{L^2(\Om)} + \|I_h u\|_{L^\infty(\Om)} \\
	\leq &\ c h^{2-d/2} \|u\|_{H^2(\Om)} + c h^{2-d/2} \|u\|_{H^2(\Om)} + \|u\|_{L^\infty(\Om)} .
	\end{aligned}
	\end{equation}
	Note that, there holds $\|\ga(R_h u)\|_{L^\infty(\Ga)} \leq \|R_h u\|_{L^\infty(\Om)}$. \bbk We recall that by assumption $u$ has a finite $L^\infty$-norm, cf.~\eqref{eq:sufficient regularity}. \ebk 
	
	Using the $L^\infty$ norm bound on the Ritz map \eqref{eq:Ritz map Linfty bound} and the local Lipschitz continuity of $W'$, via \eqref{eq:star and H_h^-1 norm definition}, we obtain that the defect in $w$ is bounded by
	\begin{align*}
	\|d_h^w\|_{\ast,h} \leq c h^2 (\|u\|_{L^\infty(\Om)} + \|u\|_{H^2(\Om)} + \|\ga u\|_{H^2(\Ga)} + \|w\|_{H^2(\Om)} + \|\ga w\|_{H^2(\Ga)} ) 
	\end{align*}
	
	The time derivative of the defect $d_h^w$ is bounded by the same techniques. We take the time derivative of the equation \eqref{eq:defect w - pre estimates}, use that $\vphi_h \in V_h$ is time independent and that $\d / \d t \, (R_h u ) = R_h \dot u$, the $L^\infty$ bounds on the Ritz map of $\dot u$ (obtained analogously as for $u$), and then use the same estimates as above to obtain
	\begin{align*}
	\|\dot d_h^w\|_{\ast,h} \leq c h^2 (\|\dot u\|_{L^\infty(\Om)} + \|\dot u\|_{H^2(\Om)} + \|\ga \dot u\|_{H^2(\Ga)} + \|\dot w\|_{H^2(\Om)} + \|\ga \dot w\|_{H^2(\Ga)} ) .
	\end{align*}
\end{proof}

\section{Proof of Theorem~\ref{theorem:semidiscrete convergence}}
\label{section:proof of main theorem}

Combining the results of the two previous section we now prove the semi-discrete convergence theorem.

\begin{proof}[Proof of Theorem~\ref{theorem:semidiscrete convergence}]
	The proof combines the results of the previous two sections on stability and consistency. 
	
	The error between the lifted numerical solutions $u_h^\ell(\cdot,t)$ and $w_h^\ell(\cdot,t)$ and the exact solutions $u(\cdot,t)$ and $w(\cdot,t)$ decomposes to (omitting the time $t$):
	\begin{align*}
	u_h^\ell - u = &\ (u_h - \widetilde{R}_h u)^\ell + (R_h u - u) = (e_h^u)^\ell + (R_h u - u) , \\
	w_h^\ell - w = &\ (w_h - \widetilde{R}_h w)^\ell + (R_h w - w) = (e_h^w)^\ell + (R_h w - w) ,
	\end{align*}
	where the second equalities follow upon recalling the definitions of the errors from Section~\ref{section:error equations}. A similar decomposition holds for the time derivatives as well.
	
	The first terms are estimated by combining the stability, Proposition~\ref{proposition:stability}, with the bounds for the defects, Proposition~\ref{proposition:defect bounds}, and the fact that  $e_h^u(0)=0$ and $e_h^w(0)=0$, which altogether immediately implies the assumed bounds in \eqref{eq:defect bounds - assumed}. Altogether, using a norm equivalence \eqref{eq:norm equivalence}, we obtain, for $0 \leq t \leq T$,
	\begin{equation*}
	\begin{aligned}
	&\ \bigg( |(e_h^u)^\ell(\cdot,t)|^2 + |(e_h^w)^\ell(\cdot,t)|^2 + \int_0^t |(\dot e_h^u)^\ell(\cdot,s) |^2 \d s \bigg)^{1/2}  \\
	\leq &\ \bigg( \|(e_h^u)^\ell(\cdot,t)\|^2 + \|(e_h^w)^\ell(\cdot,t)\|^2 + \int_0^t \|(\dot e_h^u)^\ell(\cdot,s) \|^2 \d s \bigg)^{1/2} \leq C h^2,
	\end{aligned}
	\end{equation*}
	where the first inequality holds by the natural estimate $|\cdot| \leq \|\cdot\|$.
	
	The second terms are estimated directly by the Ritz map error estimates of Lemma~\ref{lemma:Ritz error est}. By translating back from the abstract functional analytic setting, in the stronger $V$ (i.e.~$H^1$) norm and in the weaker $H$ (i.e.~$L^2$) norm, we respectively obtain
	\begin{align*}
	\|R_h u - u\| = &\ \|R_h u - u\|_{H^1(\Om)} + \|\ga (R_h u - u)\|_{H^1(\Ga)} \leq c h , \\
	|R_h u - u| = &\ \|R_h u - u\|_{L^2(\Om)} + \|\ga (R_h u - u)\|_{L^2(\Ga)} \leq c h^2 . \\
	\end{align*}
	For $w$ we have the same bounds.

	Combining these estimates yields the stated convergence result.
\end{proof}

\section{Linearly implicit backward difference time discretisation}
\label{section:BDF}

The semi-discrete problem \eqref{eq:semi-discrete problem} is first rewritten in the matrix--vector form, with $\bfu(t)$ and $\bfw(t)$ collecting the nodal values of the finite element functions $u_h(\cdot,t)$ and $w_h(\cdot,t)$ in $V_h$, respectively,
\begin{subequations}
	\label{eq:CH matrix vector form}
	\begin{align}
	\bfM \dot\bfu(t) - \bfA \bfw(t) = &\ 0 , \\
	\bfM \bfw(t) + \bfA \bfu(t) = &\ \bfW'(\bfu(t)) .
	\end{align}
\end{subequations}
Here $\bfA$ and $\bfM$ denote the stiffness and mass matrix given through \eqref{eq:discrete bilinear forms}, while the vector $\bfW'(\bfu(t))$ is given via the right-hand side of \eqref{eq:semi-discrete problem - b}.

As a time discretisation of the system \eqref{eq:CH matrix vector form}, we consider the linearly implicit $q$-step \emph{backward differentiation formulae} (BDF). 
For a step size $\tau>0$, and with $t_n = n \tau \leq T$, we determine the approximations to the variables $\bfu^n$ to $\bfu(t_n)$ and $\bfw^n$ to $\bfw(t_n)$ by the fully discrete system of \emph{linear} equations, for $n \geq q$,
\begin{subequations}
	\label{eq:CH BDF}
	\begin{align}
	\label{eq:CH BDF - 1}
	\bfM \dot\bfu^n - \bfA \bfw^n = &\ 0 , \\
	\label{eq:CH BDF - 2}
	\bfM \bfw^n + \bfA \bfu^n = &\ \bfW'(\widetilde{\bfu}^n) ,
	\end{align}
\end{subequations}
where the discretised time derivative is determined by
\begin{equation}
\label{eq:backward differences def}
\dot \bfu^n = \frac{1}{\tau} \sum_{j=0}^q \delta_j \bfu^{n-j} , \qquad n \geq q ,
\end{equation}
while the non-linear term uses an extrapolated value, and is given by:
\begin{equation*}
\bfW'(\widetilde{\bfu}^n) := \bfW' \bigg(\sum_{j=0}^{s-1} \gamma_j \,\bfu^{n - 1 -j} \bigg) , \qquad n \geq s .
\end{equation*}
The starting values $\bfu^i$ ($i=0,\dotsc,q-1$) are assumed to be given. They can be precomputed using either a lower order method with smaller step sizes, or an implicit Runge--Kutta method. The initial values $\bfw^i$ ($i=0,\dotsc,q-1$) are computed from the already obtained $\bfu^i$.

The method is determined by its coefficients, given by $\delta(\zeta)=\sum_{j=0}^q \delta_j \zeta^j=\sum_{\ell=1}^q \frac{1}{\ell}(1-\zeta)^\ell$ and $\gamma(\zeta) = \sum_{j=0}^{q-1} \gamma_j \zeta^j = (1 - (1-\zeta)^q)/\zeta$. 
The classical BDF method is known to be zero-stable for $q\leq6$ and to have order $q$; see \cite[Chapter~V]{HairerWannerII}.
This order is retained by the linearly implicit variant using the above coefficients $\gamma_j$; 
cf.~\cite{AkrivisLubich_quasilinBDF,AkrivisLiLubich_quasilinBDF}.

The anti-symmetric structure of the system \eqref{eq:semi-discrete problem}, is preserved by the above time discretisation, and can be observed in \eqref{eq:CH BDF}. Using the $G$-stability theory of \cite{Dahlquist} and the multiplier technique of \cite{NevanlinnaOdeh}, the energy estimates used in the proof of Proposition~\ref{proposition:stability} can be transferred to linearly implicit BDF full discretisations (up to order 5). 

Therefore, we strongly expect that  Proposition~\ref{proposition:stability} translates to the fully discrete case, and so does the convergence result Theorem~\ref{theorem:semidiscrete convergence}, with classical convergence order in time. 
The successful application of these techniques to the analogous (linearly implicit) BDF discretisation applied to evolving surface PDEs, e.g.~\cite{LubichMansourVenkataraman_bdsurf,ALE2,KovacsPower_quasilinear} showing optimal-order error bounds for various problems on evolving surfaces, strengthens this statement. 
Linearly implicit BDF methods were also analysed for various geometric surface flows: for $H^1$-regularised surface flows \cite{soldrivenBDF}, and for mean curvature flow \cite{MCF}, both proving optimal-order error bounds for full discretisations, using the above mentioned techniques and energy estimates testing with the time derivative of the error.

\section{Numerical experiments}
\label{section:numerics}

In this section we present some numerical experiments to illustrate our theoretical results. We consider the Cahn--Hilliard equation with Cahn--Hilliard-type dynamic boundary conditions \eqref{eq:CH system} in a disk $\Om$ with its boundary $\Ga$. We present two numerical experiments: a convergence test, and a numerical test reporting on the evolution of the $u$ component when the equation is started from random initial data. In both cases for the spatial discretization we use linear bulk--surface finite elements, while for time discretization we use the 2- or 3-step BDF method, cf.~\eqref{eq:CH BDF}. 

\subsection{Convergence test}

For the convergence experiment, additional inhomogeneities are added to each equation in \eqref{eq:CH system}, such that the exact solution is known to be $u(x,t) = w(x,t) = e^{-t} x_1 x_2$. The experiment is performed with the double-well potential both in the bulk and on the surface, i.e.~with the non-linearities $W_\Om(u) = W_\Ga(u) = \tfrac{1}{4}(u^2-1)^2$. \bbk Since, the exact solution is known, we used the interpolation of the exact initial data. \ebk 

The domain $\Om$ is the unit disk, and the final time is $T=1$. For this experiment we used a sequence of time step sizes $\tau = (0.05$, $0.025$, $0.0125$, $0.005$, $0.0025$, $0.00125)$ (having an approximate ratio of $2$), and a sequence of initial meshes with degrees of freedom  $2^k\cdot10 $ for $k=1,\ldots, 8$.

\begin{figure}[htbp]
	\includegraphics[width=\textwidth,trim={0 0 0 0},clip]{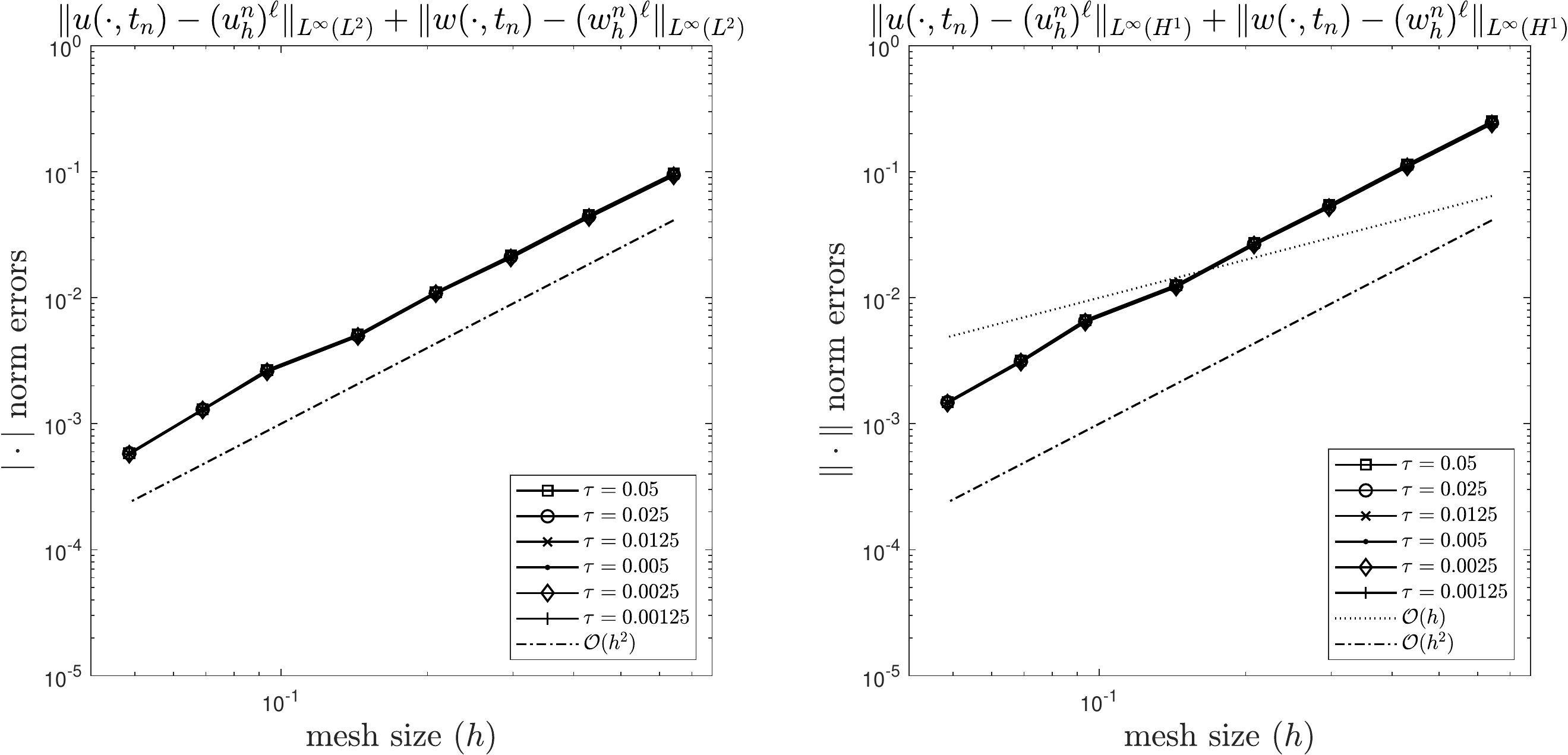}
	\caption{Spatial convergence plots for the linear bulk--surface FEM / BDF3 approximation to the non-linear Cahn--Hilliard equation with Cahn--Hilliard-type dynamic boundary conditions with free energy potentials $W_\Om(u) = W_\Ga(u) = \tfrac{1}{4}(u^2-1)^2$.}
	\label{figure:convplot_CH_nonlinear}
\end{figure}

In Figure~\ref{figure:convplot_CH_nonlinear} we report on the bulk--surface \bbk $L^\infty([0,T],L^2)$ \ebk norm errors (left) and \bbk $L^\infty([0,T],H^1)$ \ebk norm errors (right) between the (linear bulk--surface FEM / BDF3) numerical approximation and the exact solution for both variables, i.e.~both the bulk and the surface error for both variables $u$ and $w$. 
The logarithmic plots show the errors against the mesh width $h$, the lines marked with different symbols correspond to different time step sizes.

In Figure~\ref{figure:convplot_CH_nonlinear} we can observe the spatial discretisation error dominates,  and matches the order of convergence of our theoretical results (note the reference lines). Note that, however, in the $H^1$ norm we observe a better convergence rate than the linear convergence order proven in Theorem~\ref{theorem:semidiscrete convergence}.


\subsection{The Cahn--Hilliard equation with dynamic boundary conditions and a double-well potential}

An illustration of the phenomena of phase separation described  by the Cahn--Hilliard equation with dynamic Cahn--Hilliard boundary conditions is shown in Figure~\ref{figure:solution}. We consider  a non-linear Cahn--Hilliard equation with potentials $W_\Om(u) = W_\Ga(u) = 10(u^2-1)^2$. We again used the linear finite element method and the linearly implicit BDF method of order $2$. For our plots shown in the figure we generated random initial data $u^0 \in \{ -1,1 \}$,  using the disk with radius 10 as a domain, a mesh with 640 nodes, and a time step size $\tau = 0.00125$. \bbk The other required initial data $\bfu^1$ is computed by a linearly implicit backward Euler step. \ebk 
Each subplot in Figure~\ref{figure:solution} shows the solution $u$ of the problem, i.e.~the phase separation, at the displayed times. The shown colorbar is valid for every subplot.

\begin{figure}[htbp]
	\includegraphics[width=1\textwidth,trim={200 0 500 0},clip]{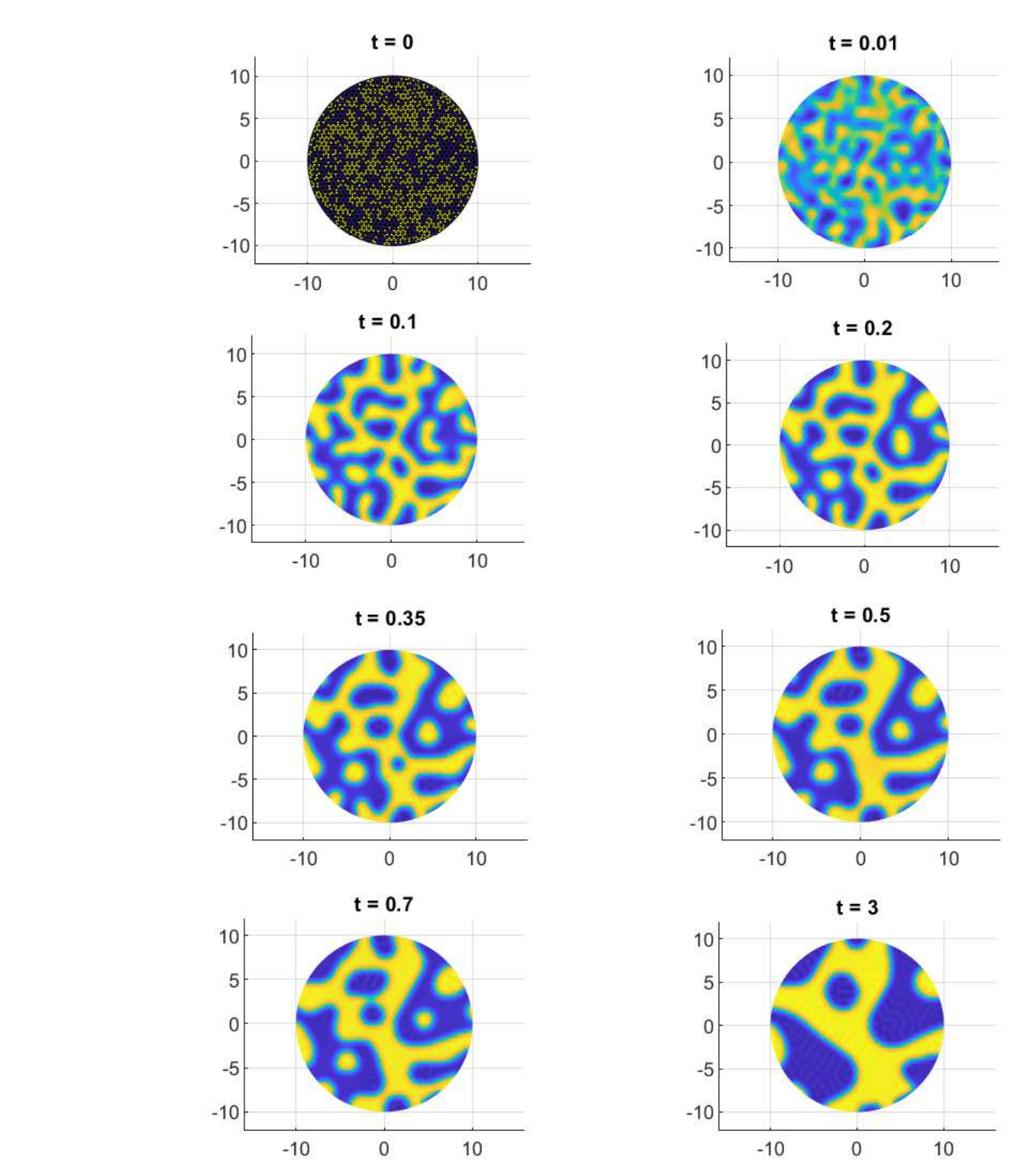}
	\caption{Evolution of the $u_h$ component of the (linear bulk--surface FEM / BDF2) approximation to the non-linear Cahn--Hilliard equation with Cahn--Hilliard-type dynamic boundary conditions, and with free energy potentials $W_\Om(u) = W_\Ga(u) = 10(u^2-1)^2$.}
	\label{figure:solution}
\end{figure}

\section*{Acknowledgements}
We thank Christian Lubich for helpful discussions, in particular on initial values. We also thank Cedric Beschle for the careful reading of the manuscript.

The manuscript was mostly written when both authors have been working at th University of T\"ubingen. We gratefully acknowledge their support.

We thank two Referees whose comments have helped us to improve the presentation of the paper.

The work of Bal\'azs Kov\'acs is supported by Deutsche Forschungsgemeinschaft -- Project-ID 258734477 -- SFB 1173, and by the Heisenberg Programme of the Deutsche Forschungsgemeinschaft (DFG, German Research Foundation) -- Project-ID 446431602.

\bibliographystyle{IMANUM-BIB}
\bibliography{CHdynbc_references}

\end{document}